\newtheorem{theorem}{Theorem}[section]
\newtheorem{lemma}{Lemma}[section]
\newcommand{\N}{\mathbb{N}}
\newcommand{\R}{\mathbb{R}}
\newcommand{\C}{\mathbb{C}}
\newcommand{\brac}[1]{\left\{#1\right\}}
\newcommand*\phantomrel[1]{\mathrel{\phantom{#1}}}
\begin{document}

\begin{flushleft}
\Large 
\noindent{\bf \Large Reciprocity gap functional methods for potentials/sources with small volume support for two elliptic equations}
\end{flushleft}

\vspace{0.2in}

{\bf  \large Govanni Granados and Isaac Harris}\\
\indent {\small Department of Mathematics, Purdue University, West Lafayette, IN 47907 }\\
\indent {\small Email:  \texttt{ggranad@purdue.edu}  and \texttt{harri814@purdue.edu} }\\


\begin{abstract}
\noindent In this paper, we consider inverse shape problems coming from diffuse optical tomography and inverse scattering. In both problems, our goal is to reconstruct small volume interior regions from measured data on the exterior surface of an object. In order to achieve this, we will derive an asymptotic expansion of the reciprocity gap functional associated with each problem. The reciprocity gap functional takes in the measured Cauchy data on the exterior surface of the object. In diffuse optical tomography, we prove that a MUSIC-type algorithm can be used to recover the unknown subregions. This gives an analytically rigorous and computationally simple method for recovering the small volume regions. For the problem coming from inverse scattering, we recover the subregions of interest via a direct sampling method. The direct sampling method presented here allows use to accurately recover the small volume region from one pair of Cauchy data. We also prove that the direct sampling method is stable with respect to noisy data. Numerical examples will be presented for both cases in two dimensions where the measurement surface is the unit circle.
\end{abstract}

\noindent {\bf Keywords}: Diffuse Optical Tomography $\cdot$ Inverse Scattering $\cdot$ MUSIC Algorithm $\cdot$ Direct Sampling \\

\noindent {\bf MSC}: 35J05, 35J25

\section{Introduction}
The two problems we consider in this paper are motivated by diffuse optical tomography (DOT) and inverse scattering theory. In both problems, the goal is to reconstruct interior subregions of small volume from known Cauchy data on the boundary of the given bounded open set $\Omega$ in $\mathbb{R}^2$ or $\mathbb{R}^3$. These are inverse shape problems where the knowledge of the solution to a partial differential equation on the boundary is used to recover unknown interior regions. Here we are interested in reconstructing a subregion $D \subset \Omega$ such that dist$(D , \partial \Omega)>0$. In our models, a Dirichlet condition is imposed on the exterior boundary $\partial \Omega$ and the corresponding Neumann condition is measured. For the entirety of this paper, we assume that $D$ is a collection of small volume subregions such that $|D| = \mathcal{O}(\epsilon^d)$, where $d$ = 2 or 3 is the dimension. To fix the notation, we let 
\begin{equation}\label{sball}
D = \bigcup_{j=1}^{J} D_j \quad \text{with} \quad D_j = (x_j + \epsilon B_j) \quad \text{such that} \quad \text{dist}(x_i , x_j) \geq c_0>0
\end{equation}
for $ i \neq j$ where the parameter $0< \epsilon \ll 1$ and $B_j$ is a domain with Lipschitz boundary centered at the origin such that $|B_j| = \mathcal{O}(1)$. We also assume that the individual regions $D_j$ are disjoint. See Figure \ref{eg-geo} for a visual representation of the described set up. 
\begin{figure}[ht]
\centering
\includegraphics[scale=0.25]{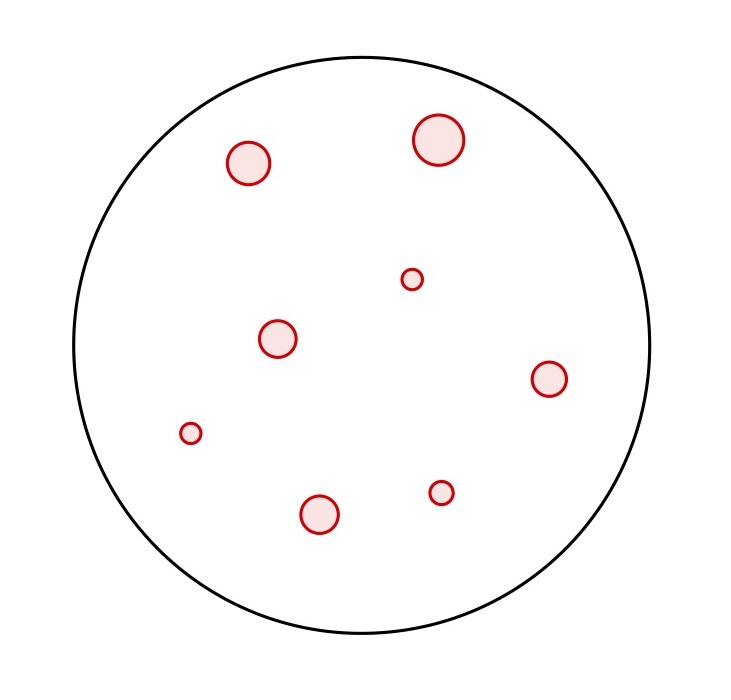}
\caption{Here is an example of the type of small volume regions $D$ described in \eqref{sball} contained in a circle $\Omega$ that will be considered throughout the paper.}
\end{figure}\label{eg-geo}

In DOT, the propagation of light through a medium is modeled by the steady-state diffusion equation. Inside the medium, we consider the case where the absorption coefficient is zero except in the small volume subregions. In this case, the Cauchy data corresponds to inward and outward light fluxes across the medium's surface. For a comprehensive description of DOT see topical reviews \cite{DOT-descrip1, DOT-descrip2}. In our inverse scattering problem, a forcing term will be applied to the direct scattering problem where the source term is zero except in the small volume subregions. Here, the Dirichlet condition represents the scattered field on the surface of the exterior boundary $\partial \Omega$. See \cite{Kress-Rundell,Ren-Zhong,Zhang-Guo,Zhang-Guo2} for more discussion on the theory and applications of this inverse scattering problem.

In order to solve both inverse shape problems, we will develop reconstruction algorithms that fall under the category of {\it qualitative methods}. In many applications, qualitative methods are optimal since one of their main advantages is that they generally require little a priori knowledge of the unknown region $D$. Whereas iterative methods usually require a priori information to construct a ``good'' initial estimate for the unknown region and/or parameters to ensure that the iterative process will converge to the unique solution of the inverse problem. Iterative methods can also be computationally expensive as well as highly ill-conditioned. To avoid requiring a priori knowledge of the small volume regions, we will analyze two qualitative methods. From the given Dirichlet data, we will assume that we have the corresponding normal derivative on the surface $\partial \Omega$ and analyze its asymptotic expansion with respect to the small parameter $\epsilon>0$. 

In our DOT problem, we reconstruct $D$ by developing a MUSIC-type algorithm. This method has been used in many imaging modalities such as acoustic \cite{MUSIC-ammari-scattering,MUSIC-jake,MUSIC-kirsch,MUSIC-park}, electromagnetic \cite{MUSIC-EM1,MUSIC-EM2,MUSIC-parkEM}, and elastic \cite{MUSIC-elastic,MUSIC-elastic2} inverse scattering. Recently, the factorization method was applied to this problem for recovering extended regions in \cite{harris1}. For our second problem associated with inverse scattering, we derive a direct sampling method which is similar to the orthogonality sampling method and reverse time migration, to reconstruct $D$. This method has been widely studied for far-field measurements for several inverse scattering problems, see for e.g. \cite{DSM-harris-dlnguyen,DSM-liu1,dlnguyen1}. These methods have also been applied to problems in DOT \cite{DSM-DOT} and Electrical Impedance Tomography \cite{DSM-EIT}. The aforementioned approaches will be in combination with the so called reciprocity gap functional defined as the surface integral\begin{equation}\label{rgf}
R[v] := \int_{\partial \Omega} v \partial_\nu u - u \partial_\nu v \, \text{d}s.
\end{equation}
In the two problems we are considering, the solution $u \in H^{1}(\Omega)$ with $L^2$ Laplacian represents the respective fields and $v \in H^{1}(\Omega)$ with $L^2$ Laplacian represents a solution to the problem without the small volume regions. This functional has been studied in \cite{rgf-colton} for another inverse scattering problem. We utilize this functional in our asymptotic analysis in order to reconstruct the unknown region with little a priori knowledge.

The rest of the paper is organized as follows. In Section \ref{DOT-section}, we consider an inverse shape problem in DOT and develop the analytic framework for the MUSIC-type algorithm, which requires multiple measurements. To do this, we apply the reciprocity gap functional to a harmonic lifting of the Dirichlet data and the measured Cauchy data in order to derive an imaging functional. We proceed in Section \ref{InvScat-section} by considering the problem from inverse scattering where we derive and analyze a direct sampling imaging functional. This method only requires a single Cauchy pair and we show its stability with respect to error. Here the reciprocity gap functional is applied to a plane wave and the measured Cauchy data. In Sections \ref{DOT-section} and \ref{InvScat-section} numerical examples are presented in $\mathbb{R}^2$ to validate the analysis of the constructed imaging functionals which is based on the asymptotic expansion of the Neumann data. Lastly, in Section \ref{Conclusion} we provide a summary of the results of this paper and briefly discuss potential directions for future research.

\section{An Application to Diffuse Optical Tomography}\label{dot}\label{DOT-section}
We begin by considering the direct problem associated with DOT. This problem stems from semiconductor theory where boundary measurements are used to determine the existence of an interior structure. Recall, that we are concerned with the case where these interior structures are of small volume. We assume that the domain $\Omega \subset \mathbb{R}^{d}$ (for $d=2,3$) is a bounded simply connected open set with Lipschitz boundary $\partial \Omega$ with unit outward normal $\nu$. We let $D \subset \Omega$ with Lipschitz boundary $\partial D$ satisfying \eqref{sball}. 

Now, we let $u \in H^{1}(\Omega)$ be the unique solution to
\begin{equation}\label{dot-mpde}
- \Delta u + \rho \chi_D u= 0 \quad \text{in} \quad \Omega \quad \text{and} \quad u \big \rvert_{\partial \Omega} = f
\end{equation}
for any given $f \in H^{1/2} ( \partial \Omega )$ where $\chi_{(\cdot)}$ denotes the indicator function. We assume the absorption coefficient $\rho \in L^{\infty}(D)$. For analytical purposes of well-posedness for the direct problem and the upcoming analysis of the inverse problem, we assume that there are constants $\rho_{\text{min}}$ and $\rho_{\text{max}}$ such that
\begin{center}
$0 < \rho_{\text{min}} \leq \rho \leq \rho_{\text{max}}$ for a.e. $x \in D$.
\end{center}
One may easily verify that \eqref{dot-mpde} is well-posed by considering its variational formulation (see for e.g. \cite{evans}). Thus, one can show that for some $C>0$ that is independent of $0<\epsilon \ll 1$ we have that 
$$\|{u}\|_{H^1 (\Omega)} \leq C \|{f}\|_{H^{1/2}(\partial \Omega)}.$$ 
By equation \eqref{dot-mpde} we have that the Cauchy data is such that $(f , \partial_{\nu} u) \in H^{1/2}(\partial \Omega) \times H^{-1/2}(\partial \Omega)$.

In this section, we will develop the MUSIC Algorithm for solving the inverse problem under consideration. The goal is to first derive an asymptotic expansion for the Neumann data. Being motivated by analysis in \cite{MUSIC-kirsch, MUSIC-armin}, we will derive an analog of the multi-static response matrix derived from the reciprocity gap functional \eqref{rgf} for this problem.

\subsection{\textbf{MUSIC Algorithm}}

We begin, by proving an asymptotic expansion of the Neumann data $\partial_\nu u$ on $\partial \Omega$ in terms of the parameter $0<\epsilon \ll 1$. To this end, we let $u_0 \in H^{1}(\Omega)$ be the harmonic lifting of the Dirichlet data such that 
\begin{equation}\label{dot-hl}
- \Delta u_0 = 0 \quad \text{in} \quad \Omega \quad \text{and} \quad u_0 \big \rvert_{\partial \Omega} = f.
\end{equation}
In other words, $u_0$ satisfies the background problem associated with \eqref{dot-mpde} without the absorption coefficient with the same Dirichlet data $f \in H^{1/2}(\partial \Omega)$. We continue by defining the Dirichlet Green's function for the negative Laplacian on the known domain $\Omega$ as $\mathbb{G}(\cdot , z) \in H^{1}_{loc} (\Omega \setminus \brac{z})$, which is the unique solution to the boundary value problem
\[ - \Delta \mathbb{G}(\cdot , z) = \delta (\cdot , z) \enspace \text{in} \enspace \Omega \quad \text{and} \quad \mathbb{G}(\cdot , z) \big \rvert_{\partial \Omega} = 0.
\]
For any fixed $z \in \Omega$, we appeal to Green's 2nd Theorem to obtain the representation 
\begin{align*}
	 -(u - u_0)(z) = \int_{\Omega} (u - u_0)(x) \Delta \mathbb{G}(x , z) \, \text{d}x&= \int_{\Omega} \mathbb{G}(x,z) \rho (x) \chi_{D} (x) u(x) \, \text{d}x \\
	 &= \int_{D} \mathbb{G}(x,z) \rho (x) u(x) \, \text{d}x
\end{align*}
where we have used the fact that the absorption coefficient is zero outside of the region $D$. By taking the normal derivative, we have that for all $z \in \partial \Omega$ 
\begin{align}\label{dot-aexp}
\partial_{\nu} (u - u_0 )(z) & = - \int_{D} \rho(x) u(x) \partial_{\nu (z)} \mathbb{G} (x , z) \, \text{d}x \nonumber \\
					    &\hspace{-0.3in}= - \int_{ D} \rho(x) u_0(x) \partial_{\nu (z)} \mathbb{G} (x , z) \, \text{d}x - \int_{D} \rho(x) (u - u_0 )(x) \partial_{\nu (z)} \mathbb{G} (x , z) \, \text{d}x  
\end{align}
where the integrands are well defined due to the fact that $z \in \partial \Omega$ and $\partial_{\nu (z)}$ denotes the normal derivative on $\partial \Omega$ with respect to $z$. Given that the region $D$ satisfies \eqref{sball}, we claim that \eqref{dot-aexp} is dominated by the first integral. In other words, the Neumann data can be approximated by the harmonic lifting $u_0$ restricted to the small volume subregions, instead of the unknown photon density $u$.

The following estimate derived in Theorem 3.1 of \cite{cakoni2} will help us in our asymptotic analysis of \eqref{dot-aexp}. It states that for all $\varphi \in H^{1}(\Omega)$ with $D \subset \Omega$ such that $|D| = \mathcal{O}(\epsilon^d)$, we have that 
\begin{equation}\label{sobo}
\|{\varphi}\|_{L^{2}(D)} \leq C \epsilon^{\frac{d}{2} \big( 1 - \frac{2}{p} \big)} \|{\varphi}\|_{H^{1}(\Omega)}
\end{equation} 
where $p \geq 2$ in $d=2$ and $2 \leq p \leq 6$ in $d=3$. This estimate is proven using the Sobolev embedding of $H^{1}(\Omega) \hookrightarrow L^{p}(\Omega)$ (see for e.g. Chapter 5 of \cite{adams}). Using \eqref{sobo}, we prove that $u_0$ approximates $u$ when $D$ has small volume.
\begin{lemma} \label{dot-u0-approx}
For all $f \in H^{1/2}(\partial \Omega)$, let $u$ and $u_0$ be the solutions to \eqref{dot-mpde} and \eqref{dot-hl}, respectively. Then, we have that 
$$ \|{u - u_0}\|_{H^{1}(\Omega)} \leq C \epsilon^{d \big ( 1 - \frac{2}{p} \big )} \|{f}\|_{H^{1/2}(\partial \Omega)}$$
provided that $|D| = \mathcal{O}(\epsilon^d)$ where $p \geq 2$ in $d=2$ and $2 \leq p \leq 6$ in $d=3$.
\end{lemma}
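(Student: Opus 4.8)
The plan is to run a standard energy estimate on the difference $w := u - u_0$, extracting two factors of $\epsilon$ from the trace-type inequality \eqref{sobo}. First I would subtract \eqref{dot-hl} from \eqref{dot-mpde} to see that $w$ solves $-\Delta w = -\rho \chi_D u$ in $\Omega$, while $w\rvert_{\partial \Omega} = f - f = 0$, so that $w \in H^1_0(\Omega)$. Testing the weak form of this equation against $w$ itself yields the identity
$$
\int_{\Omega} |\nabla w|^2 \, \mathrm{d}x = - \int_{D} \rho \, u \, w \, \mathrm{d}x,
$$
where the right-hand side is already localized to $D$ since $\rho \chi_D$ vanishes outside $D$.

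Next I would estimate the right-hand side. Using $\rho \leq \rho_{\text{max}}$ together with the Cauchy--Schwarz inequality, the integral is bounded by $\rho_{\text{max}} \|u\|_{L^2(D)} \|w\|_{L^2(D)}$. The key step is to apply \eqref{sobo} to \emph{both} factors: since $u, w \in H^1(\Omega)$, each $L^2(D)$ norm contributes a factor $\epsilon^{\frac{d}{2}(1 - \frac{2}{p})}$, and together they produce the full power $\epsilon^{d(1 - \frac{2}{p})}$. This gives
$$
\|\nabla w\|_{L^2(\Omega)}^2 \leq C \, \epsilon^{d \left( 1 - \frac{2}{p} \right)} \|u\|_{H^1(\Omega)} \|w\|_{H^1(\Omega)}.
$$

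To finish, I would invoke the Poincar\'e inequality on $H^1_0(\Omega)$ to control $\|w\|_{H^1(\Omega)}$ by $\|\nabla w\|_{L^2(\Omega)}$, so that $\|w\|_{H^1(\Omega)}^2 \leq C \epsilon^{d(1-\frac{2}{p})} \|u\|_{H^1(\Omega)} \|w\|_{H^1(\Omega)}$; dividing through by $\|w\|_{H^1(\Omega)}$ leaves a bound that is linear in $\|u\|_{H^1(\Omega)}$. The well-posedness estimate $\|u\|_{H^1(\Omega)} \leq C \|f\|_{H^{1/2}(\partial \Omega)}$ recorded just after \eqref{dot-mpde}, whose constant is independent of $\epsilon$, then converts the right-hand side into the stated $\|f\|_{H^{1/2}(\partial \Omega)}$.

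The argument is otherwise routine; the only real subtlety, and hence the point I would be most careful about, is the bookkeeping of the exponent. A naive estimate applying \eqref{sobo} to only one of the two factors would yield the weaker power $\frac{d}{2}(1 - \frac{2}{p})$, so obtaining the sharp exponent $d(1 - \frac{2}{p})$ relies on exploiting the $H^1$ smoothness of both $u$ and $w$ simultaneously. I would also track that the constants coming from \eqref{sobo}, the Poincar\'e inequality, and the well-posedness bound are each independent of $\epsilon$, which is exactly what makes the final estimate meaningful in the regime $0 < \epsilon \ll 1$.
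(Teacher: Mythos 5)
Your proposal is correct and follows essentially the same route as the paper's own proof: the energy identity $\int_\Omega |\nabla(u-u_0)|^2\,\mathrm{d}x = -\int_D \rho\, u\,\overline{(u-u_0)}\,\mathrm{d}x$ obtained from the two boundary value problems, Cauchy--Schwarz with $\rho \leq \rho_{\text{max}}$, the estimate \eqref{sobo} applied to \emph{both} $L^2(D)$ factors to gain the full power $\epsilon^{d(1-\frac{2}{p})}$, then the Poincar\'e inequality on $H^1_0(\Omega)$ and the $\epsilon$-independent well-posedness bound. Your remark about the exponent bookkeeping is exactly the point of the paper's argument as well.
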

\begin{proof} Since, $u-u_0 \in H_{0}^{1} (\Omega)$ we have that $\| u-u_0 \|_{H^1(\Omega)} \leq C  \| \nabla (u-u_0) \|_{L^2(\Omega)}$ by the Poincar\'{e} inequality. By appealing to equations \eqref{dot-mpde} and \eqref{dot-hl} along with Green's 1st Theorem, we have that 
\begin{align*}
 \int_{\Omega} \big | \nabla (u - u_0 ) \big |^2 \, \text{d}x  = - \int_{D} \rho u \overline{(u - u_0)} \, \text{d}x &\leq \rho_{\text{max}} \|{u}\|_{L^{2}(D)} \|{u - u_0}\|_{L^{2}(D)}.
\end{align*} 
Now, by using the estimate in \eqref{sobo}, we obtain that 
\begin{align*}
	 \|{u}\|_{L^{2}(D)} \|{u - u_0}\|_{L^{2}(D)} &\leq C \epsilon^{d\left( 1- \frac{2}{p} \right)} \|{u}\|_{H^{1} (\Omega)} \|{u - u_0}\|_{H^{1} (\Omega)}.
\end{align*}
This proves the claim by appealing to the well-posedness of \eqref{dot-mpde}.
\end{proof}
From the above lemma, we have shown that $u$ can be approximated by $u_0$ in norm when $|D| = \mathcal{O}(\epsilon^d)$ is small. Under the same assumption, we will use the previous lemma along with \eqref{sobo} to compare the magnitudes of the two integrals in equation \eqref{dot-aexp}. We start by analyzing the second integral and provide the following results.
\begin{lemma}\label{dot-2nd-int}
For $z \in \partial \Omega$ and $|D| = \mathcal{O}(\epsilon^d)$, we have that $$\int_{D} \rho (x) (u - u_0)(x) \partial_{\nu (z)} \mathbb{G}(x,z) \, \text{d}x = \mathcal{O}(\epsilon^{d+1}) \quad \text{as} \quad \epsilon \rightarrow 0.$$
\end{lemma}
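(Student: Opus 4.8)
The plan is to estimate the integral directly by the Cauchy--Schwarz inequality, exploiting that the factor $(u-u_0)$ is already known to be small in $H^1(\Omega)$ by Lemma \ref{dot-u0-approx}, while the Green's function factor is harmless because its singularity never enters the domain of integration. The first step is to write
$$\left| \int_{D} \rho\, (u - u_0)\, \partial_{\nu (z)} \mathbb{G}(\cdot , z) \, \text{d}x \right| \leq \rho_{\text{max}} \, \|u - u_0\|_{L^2(D)} \, \|\partial_{\nu (z)} \mathbb{G}(\cdot , z)\|_{L^2(D)},$$
using $\rho \leq \rho_{\text{max}}$. This reduces the problem to estimating the two $L^2(D)$ norms separately, each of which should contribute a power of $\epsilon$.

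For the first factor, I would apply the Sobolev-type estimate \eqref{sobo} to $\varphi = u - u_0 \in H^1(\Omega)$ and then insert the bound from Lemma \ref{dot-u0-approx}, which gives
$$\|u - u_0\|_{L^2(D)} \leq C \epsilon^{\frac{d}{2}\left( 1 - \frac{2}{p} \right)} \|u - u_0\|_{H^1(\Omega)} \leq C \epsilon^{\frac{3d}{2}\left( 1 - \frac{2}{p} \right)} \|f\|_{H^{1/2}(\partial \Omega)}.$$
For the second factor, the decisive point is that $z \in \partial \Omega$ while $x$ ranges over $D$ with $\text{dist}(D , \partial \Omega) > 0$, so that $x$ stays uniformly away from the singularity of $\mathbb{G}(\cdot , z)$; hence $x \mapsto \partial_{\nu (z)} \mathbb{G}(x , z)$ is smooth and uniformly bounded on $\overline{D}$. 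Because the centers $x_j$ in \eqref{sball} are fixed interior points onto which $D$ collapses as $\epsilon \to 0$, this bound is independent of $\epsilon$, and therefore
$$\|\partial_{\nu (z)} \mathbb{G}(\cdot , z)\|_{L^2(D)} \leq \|\partial_{\nu (z)} \mathbb{G}(\cdot , z)\|_{L^\infty(D)} \, |D|^{1/2} \leq C \epsilon^{d/2}.$$

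Combining the three estimates produces a bound of order $\epsilon^{\frac{3d}{2}\left( 1 - \frac{2}{p} \right) + \frac{d}{2}}$, and I would finish by choosing $p$ in the admissible range to make this exponent at least $d+1$. Taking $p = 6$ works in both cases: the exponent equals exactly $d+1$ when $d=2$ and equals $\tfrac{9}{2} \geq d+1$ when $d=3$, so in either dimension the integral is $\mathcal{O}(\epsilon^{d+1})$ as claimed.

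The main obstacle, and the only step requiring genuine care, is the uniform $L^\infty(D)$ bound on $\partial_{\nu (z)} \mathbb{G}(\cdot , z)$: I would justify it by interior elliptic regularity for the Dirichlet Green's function away from the diagonal together with the separation $\text{dist}(D , \partial \Omega) > 0$, and I must verify that the constant does not degenerate as $\epsilon \to 0$. Since $D$ shrinks toward the fixed interior points $x_j$ rather than approaching $\partial \Omega$, the distance to the singularity remains bounded below, so the constant stays uniform and the argument closes.
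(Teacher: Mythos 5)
Your proof is correct and follows essentially the same route as the paper's: Cauchy--Schwarz, the estimate \eqref{sobo} together with Lemma \ref{dot-u0-approx} applied to the factor $u-u_0$, a uniform bound on $\partial_{\nu(z)}\mathbb{G}(\cdot,z)$ over $D$ (valid since $D$ stays away from $\partial\Omega$, hence away from the singularity) giving $\|\partial_{\nu(z)}\mathbb{G}(\cdot,z)\|_{L^2(D)} \leq C\epsilon^{d/2}$, and finally a choice of $p$. The only (harmless) difference is that you take $p=6$ in both dimensions, which yields the even stronger bound $\mathcal{O}(\epsilon^{9/2})$ when $d=3$, whereas the paper takes $p=4.5$ for $d=3$ so that the exponent equals $d+1$ exactly; both choices are admissible.
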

\begin{proof}
In order to prove the claim, we must estimate 
\begin{align*}
	\left| \int_{D} \rho (x) (u - u_0 ) (x) \partial_{\nu (z)} \mathbb{G} (x , z) \, \text{d}x  \right| 
	&\leq C \epsilon^{\frac{d}{2}\left( 1- \frac{2}{p} \right)} \|{u - u_0}\|_{H^{1} (\Omega)} \|{\partial_{\nu (z)} \mathbb{G}(\cdot , z )}\|_{L^{2} (D)} \\
	&\leq C \epsilon^{\frac{3d}{2}\left( 1- \frac{2}{p} \right)} \|{f}\|_{H^{1/2}(\partial \Omega)} \|{\partial_{\nu (z)} \mathbb{G}(\cdot , z )}\|_{L^{2} (D)}
\end{align*} 
where we have used \eqref{sobo}, and Lemma \ref{dot-u0-approx} in order. We also have that 
\begin{align*}
	\|{\partial_{\nu (z)} \mathbb{G}(\cdot , z )}\|_{L^{2} (D)} \leq  \|{\nabla \mathbb{G}(\cdot , z )}\|_{L^2(D)} \leq C \epsilon^{d/2} \|{\mathbb{G}(\cdot , z )}\|_{C^{1} ( \Omega^{*})}
\end{align*}
by the fact that $\nu$ is a unit vector and the symmetry of the Green's function. The region $\Omega^{*}$ satisfies that $D \subset \Omega^{*} \subset \Omega$ for all $0 < \epsilon \ll 1$ with dist$(\partial \Omega , \Omega^{*})>0$. Thus, we have that for all $z \in \partial \Omega$ 
\begin{align}\label{powereps}
\left| \int_{D} \rho (x) (u - u_0 )(x) \partial_{\nu (z)} \mathbb{G} (x , z) \, \text{d}x \right|  \leq C \epsilon^{d \left( 2 - \frac{3}{p} \right)} \|{f}\|_{H^{1/2}(\partial \Omega)}.
\end{align}
For $d=2$, we recall that $ p \geq 2$. In order to prove the claim, we impose the condition that 
$$3 = 2 \left( 2 - \frac{3}{p} \right) \quad \text{making the exponent of $\epsilon$ equal to $d+1$ in \eqref{powereps}},$$ 
which yields that $p=6$. From the above inequality we get that  
$$ \int_{D}\rho (x) (u - u_0 )(x) \partial_{\nu (z)} \mathbb{G} (x , z)\, \text{d}x \leq  C \epsilon^{3} \|{f}\|_{H^{1/2}(\partial \Omega)}.$$
Similarly, for $d=3$, we recall that $2 \leq p \leq 6$. 
Again, to prove the claim we impose that 
$$4 = 3 \left(2 - \frac{3}{p} \right) \quad \text{again making the exponent of $\epsilon$ equal to $d+1$ in \eqref{powereps}},$$ 
which yields that $p=4.5$. Thus, we have that  
$$ \int_{ D} \rho (x) (u - u_0 )(x) \partial_{\nu (z)} \mathbb{G} (x , z) \, \text{d}x \leq  C \epsilon^{4} \|{f}\|_{H^{1/2}(\partial \Omega)}.$$
Therefore, for both $d=2$ and $d=3$ taking $p=6$ and $p=4.5$, respectively, we have that 
$$ \int_{D} \rho (x) (u - u_0 )(x) \partial_{\nu (z)} \mathbb{G} (x , z) \, \text{d}x  = \mathcal{O} \big( \epsilon^{d+1} \big) \quad \text{as} \quad \epsilon \to 0$$
which proves the claim. 
\end{proof}
Next, we show that the first integral in \eqref{dot-aexp} is of order $\epsilon^{d}$. From this, equation \eqref{dot-aexp} will imply that the first integral is the leading term, rendering the second integral as negligible. This is proven in the following result. 

\begin{lemma}\label{dot-1st-int}
For all $z \in \partial \Omega$ where $D$ is given by \eqref{sball}, we have that as $\epsilon \rightarrow 0$ $$ \int_{D} \rho (x) u_0 (x) \partial_{\nu (z)} \mathbb{G}(x,z) \, \text{d}x = \epsilon^d \sum_{j=1}^{J} |B_j| \text{Avg}(\rho_j) u_0 (x_j ) \partial_{\nu (z)} \mathbb{G}(x_j , z) + \mathcal{O}(\epsilon^{d+1})$$ where Avg$(\rho_j)$ is the average value of $\rho$ in $D_j$.
\end{lemma}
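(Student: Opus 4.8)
The plan is to exploit the small-volume, multi-component structure of $D$ in \eqref{sball} by localizing the integral to each subregion and rescaling to a fixed reference domain. First I would split, using that the $D_j$ are disjoint, $\int_{D} \rho u_0 \partial_{\nu(z)}\mathbb{G}(\cdot,z)\,\text{d}x = \sum_{j=1}^{J}\int_{D_j}\rho u_0 \partial_{\nu(z)}\mathbb{G}(\cdot,z)\,\text{d}x$, and on each piece perform the change of variables $x = x_j + \epsilon y$ with $y \in B_j$. Since $\text{d}x = \epsilon^d\,\text{d}y$, this extracts the factor $\epsilon^d$ and recasts each term as $\epsilon^d \int_{B_j}\rho(x_j+\epsilon y)\,u_0(x_j+\epsilon y)\,\partial_{\nu(z)}\mathbb{G}(x_j+\epsilon y,z)\,\text{d}y$ over the fixed domain $B_j$.

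Next I would freeze the two smooth factors at the center $x_j$. Because $u_0$ is harmonic, interior elliptic regularity gives $u_0 \in C^1(\Omega^*)$ with $\|u_0\|_{C^1(\Omega^*)}\le C\|f\|_{H^{1/2}(\partial\Omega)}$, and for $z \in \partial\Omega$ the only singularity of $\mathbb{G}(\cdot,z)$ sits on $\partial\Omega$, at distance bounded below from $\Omega^*$, so $\partial_{\nu(z)}\mathbb{G}(\cdot,z)$ is $C^1$ on $\Omega^*$ uniformly in $z$. The mean-value form of Taylor's theorem then yields $u_0(x_j+\epsilon y)=u_0(x_j)+\mathcal{O}(\epsilon)$ and $\partial_{\nu(z)}\mathbb{G}(x_j+\epsilon y,z)=\partial_{\nu(z)}\mathbb{G}(x_j,z)+\mathcal{O}(\epsilon)$, with constants controlled by these $C^1$ norms and $|y|\le \text{diam}(B_j)$. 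Crucially, I would not attempt to expand $\rho$, which is only $L^\infty$; I keep it inside the integral. Pulling the frozen values out of the $y$-integral, the leading part of each piece becomes $\epsilon^d u_0(x_j)\,\partial_{\nu(z)}\mathbb{G}(x_j,z)\int_{B_j}\rho(x_j+\epsilon y)\,\text{d}y$.

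The identification of the average comes from undoing the rescaling once more: $\int_{B_j}\rho(x_j+\epsilon y)\,\text{d}y = \epsilon^{-d}\int_{D_j}\rho(x)\,\text{d}x = |B_j|\,\text{Avg}(\rho_j)$, since $|D_j|=\epsilon^d|B_j|$ and $\text{Avg}(\rho_j) := |D_j|^{-1}\int_{D_j}\rho\,\text{d}x$. Summing over $j$ reproduces exactly the claimed main term $\epsilon^d\sum_{j=1}^{J} |B_j|\,\text{Avg}(\rho_j)u_0(x_j)\partial_{\nu(z)}\mathbb{G}(x_j,z)$. For the remainder, each cross term is $\epsilon^d$ times an $\mathcal{O}(\epsilon)$ factor times $\int_{B_j}|\rho(x_j+\epsilon y)|\,\text{d}y\le \rho_{\text{max}}|B_j|$, hence $\mathcal{O}(\epsilon^{d+1})$; the fixed number $J$ of components preserves this order.

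The main obstacle I anticipate is not the algebra but ensuring the $\mathcal{O}(\epsilon^{d+1})$ bound is genuinely uniform over all $z \in \partial\Omega$ while accounting honestly for the non-smoothness of $\rho$. Uniformity in $z$ hinges on a $z$-independent $C^1$ bound for $\partial_{\nu(z)}\mathbb{G}(\cdot,z)$ on $\Omega^*$, which holds because $\text{dist}(\partial\Omega,\Omega^*)>0$ keeps the Green's-function singularity away from the integration region. The merely $L^\infty$ regularity of $\rho$ is what forces the average, rather than a pointwise value $\rho(x_j)$, to appear in the leading coefficient, and this is precisely why freezing only the smooth factors while retaining $\rho$ under the integral is the correct bookkeeping.
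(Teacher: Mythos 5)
Your proposal is correct and follows essentially the same route as the paper's proof: localize to each $D_j$, Taylor-expand the smooth product $u_0\,\partial_{\nu(z)}\mathbb{G}(\cdot,z)$ at the center $x_j$ while keeping the merely $L^\infty$ coefficient $\rho$ under the integral, and identify $\int_{D_j}\rho\,\text{d}x = \epsilon^d|B_j|\,\text{Avg}(\rho_j)$. Your explicit rescaling to the reference domains $B_j$ and your remarks on the $z$-uniform $C^1$ control of $\partial_{\nu(z)}\mathbb{G}(\cdot,z)$ away from $\partial\Omega$ merely make precise what the paper leaves implicit.
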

\begin{proof}
By \eqref{sball}, we have that $x \in D_j$ if and only if $x = x_j + \epsilon y$ for some $y \in B_j$. Now, recall that both $u_0$ and $\partial_{\nu (z)} \mathbb{G}(\cdot ,z)$ are smooth in the interior of of $\Omega$ since $z \in \partial \Omega$ by standard elliptic regularity. Therefore, we have that for all $x \in D_j$ 
$$ u_0 (x) \partial_{\nu (z)} \mathbb{G}(x,z) = u_0 (x_j + \epsilon y) \partial_{\nu (z)} \mathbb{G}(x_j + \epsilon y,z) = u_0 (x_j) \partial_{\nu (z)} \mathbb{G}(x_j,z) + \mathcal{O}(\epsilon)$$ 
as $\epsilon \rightarrow 0$ by appealing to Taylor's Theorem. From this, we obtain that 
\begin{align*}
	\int_{D} \rho (x) u_0 (x) \partial_{\nu (z)} \mathbb{G} (x , z) \, \text{d}x  &= \sum_{j=1}^{J} \int_{D_j} \rho (x) u_0 (x_j + \epsilon y) \partial_{\nu (z)} \mathbb{G}(x_j + \epsilon y,z) \, \text{d}x \\
	&= \sum_{j=1}^{J} \Big ( u_0 (x_j) \partial_{\nu (z)} \mathbb{G}(x_j,z) + \mathcal{O}(\epsilon) \Big )  \int_{D_j} \rho (x) \, \text{d}x.
\end{align*} 
This implies that 
$$\int_{D} \rho (x) u_0 (x) \partial_{\nu (z)} \mathbb{G} (x , z) \, \text{d}x  = \epsilon^d \sum_{j=1}^{J} |B_j| \text{Avg}(\rho_j) u_0 (x_j ) \partial_{\nu (z)} \mathbb{G}(x_j , z) + \mathcal{O}(\epsilon^{d+1})$$ 
as $\epsilon \rightarrow 0$ where $\text{Avg}(\rho_j)$ denotes the average value of $\rho$ in $D_j$ as well as using the fact that $|D_j| = \epsilon^d |B_j|$.
\end{proof}
Using Lemmas \ref{dot-2nd-int} and \ref{dot-1st-int}, it is clear that for a specified $z \in \partial \Omega$, the normal derivative of the difference of $u$ and $u_0$ is dominated by the first integral from equation \eqref{dot-aexp}. Therefore, we have proven an asymptotic expansion for the boundary data $\partial_\nu u$ in therms of the known harmonic lifting and Green's function. Similar results have been proven in \cite{MUSIC-ammari-eit,MUSIC-Hanke} using boundary integral operators.

\begin{theorem}\label{dot-main-thm}
For any $z \in \partial \Omega$ we have that $$\partial_{\nu (z)} u (z) = \partial_{\nu (z)} u_0 (z) - \epsilon^d \sum_{j=1}^{J} u(x_j) \text{Avg}(\rho_j) |B_j| \partial_{\nu (z)} \mathbb{G}(x_j , z) + \mathcal{O}(\epsilon^{d+1}) \quad \text{as} \quad \epsilon \rightarrow 0$$
provided that $D$ satisfies \eqref{sball}.
\end{theorem}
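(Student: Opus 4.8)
The plan is to treat the statement as the arithmetic combination of the splitting \eqref{dot-aexp} with the two preparatory lemmas, followed by a comparison of $u$ and $u_0$ at the source centers. First I would recall that \eqref{dot-aexp} writes
\[
\partial_{\nu(z)}(u-u_0)(z) = -\int_D \rho\, u_0\, \partial_{\nu(z)}\mathbb{G}(\cdot,z)\,\mathrm{d}x \;-\; \int_D \rho\,(u-u_0)\,\partial_{\nu(z)}\mathbb{G}(\cdot,z)\,\mathrm{d}x .
\]
By Lemma \ref{dot-1st-int} the first integral equals $\epsilon^d\sum_{j=1}^J |B_j|\,\text{Avg}(\rho_j)\,u_0(x_j)\,\partial_{\nu(z)}\mathbb{G}(x_j,z)$ up to $\mathcal{O}(\epsilon^{d+1})$, and by Lemma \ref{dot-2nd-int} the second integral is $\mathcal{O}(\epsilon^{d+1})$. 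Moving $\partial_{\nu(z)}u_0(z)$ to the other side then gives
\[
\partial_{\nu(z)}u(z) = \partial_{\nu(z)}u_0(z) - \epsilon^d\sum_{j=1}^J u_0(x_j)\,\text{Avg}(\rho_j)\,|B_j|\,\partial_{\nu(z)}\mathbb{G}(x_j,z) + \mathcal{O}(\epsilon^{d+1}),
\]
which is the claimed expansion with $u_0(x_j)$ in place of $u(x_j)$.

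The one genuine step, and the part I expect to carry the weight of the argument, is to exchange $u_0(x_j)$ for $u(x_j)$ in the leading sum at the cost of only $\mathcal{O}(\epsilon^{d+1})$. Because each prefactor $\text{Avg}(\rho_j)\,|B_j|\,\partial_{\nu(z)}\mathbb{G}(x_j,z)$ is $\mathcal{O}(1)$, it is enough to establish the interior pointwise bound $|u(x_j)-u_0(x_j)| = \mathcal{O}(\epsilon)$. I would obtain this from the representation used at the start of the section, now evaluated at the interior point $z=x_j$, namely
\[
(u-u_0)(x_j) = -\int_D \mathbb{G}(x,x_j)\,\rho(x)\,u(x)\,\mathrm{d}x ,
\]
estimated by Cauchy--Schwarz as $\rho_{\text{max}}\,\|\mathbb{G}(\cdot,x_j)\|_{L^2(D)}\,\|u\|_{L^2(D)}$. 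The weakly singular kernel is square integrable near $x_j$ in both dimensions, with $\|\mathbb{G}(\cdot,x_j)\|_{L^2(D)} = \mathcal{O}(\epsilon|\log\epsilon|)$ for $d=2$ and $\mathcal{O}(\epsilon^{1/2})$ for $d=3$, while \eqref{sobo} controls $\|u\|_{L^2(D)}$; tracking the exponents yields $|u(x_j)-u_0(x_j)|$ of order strictly higher than $\epsilon$, so $\epsilon^d|u(x_j)-u_0(x_j)| = \mathcal{O}(\epsilon^{d+1})$ and the replacement is harmless. An equally valid route is interior elliptic regularity for $u-u_0\in H^1_0(\Omega)$, whose Laplacian is supported in $D$, combined with Lemma \ref{dot-u0-approx}.

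Finally I would verify the uniformity implicit in the phrase ``for any $z\in\partial\Omega$''. The constants produced by Lemmas \ref{dot-1st-int} and \ref{dot-2nd-int} are governed by $\|\mathbb{G}(\cdot,z)\|_{C^1(\Omega^*)}$ and $\|f\|_{H^{1/2}(\partial\Omega)}$ on the fixed interior set $\Omega^*\supset D$; since $z$ ranges over the compact boundary and $\mathbb{G}(\cdot,z)$ is uniformly smooth on $\Omega^*$ as $z$ varies, the remainder $\mathcal{O}(\epsilon^{d+1})$ is uniform in $z$. With this in hand no further cancellation is needed, and the theorem is simply \eqref{dot-aexp} re-solved for $\partial_{\nu(z)}u(z)$ together with the interior comparison of $u$ and $u_0$.
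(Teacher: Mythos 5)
Your proof is correct, and its skeleton coincides with the paper's: the paper's entire proof of Theorem \ref{dot-main-thm} consists of inserting Lemmas \ref{dot-1st-int} and \ref{dot-2nd-int} into the decomposition \eqref{dot-aexp}, exactly as in your first two steps. What you add --- and what the paper silently omits --- is the reconciliation of the coefficient $u_0(x_j)$, which is what the two lemmas actually produce, with the coefficient $u(x_j)$ appearing in the theorem's statement. The paper never justifies this interchange (indeed, in the reciprocity-gap computation immediately after the theorem it swaps $u(x_j)$ back to $u_0(x_j,f)$ without comment), so your interior comparison is precisely the missing step rather than an optional refinement. Your main argument for it is sound: evaluating the Green representation at the interior point, $(u-u_0)(x_j)=-\int_D\mathbb{G}(x,x_j)\rho(x)u(x)\,\mathrm{d}x$, and applying Cauchy--Schwarz gives $\|\mathbb{G}(\cdot,x_j)\|_{L^2(D)}=\mathcal{O}(\epsilon|\log\epsilon|)$ for $d=2$ and $\mathcal{O}(\epsilon^{1/2})$ for $d=3$ (the kernel is weakly singular and the off-center components $D_i$, $i\neq j$, contribute only $\mathcal{O}(\epsilon^{d/2})$ since $\mathrm{dist}(x_i,x_j)\geq c_0$), while \eqref{sobo} gives $\|u\|_{L^2(D)}=\mathcal{O}\big(\epsilon^{\frac{d}{2}(1-\frac{2}{p})}\big)$. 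The product is $o(\epsilon)$ in both dimensions (e.g.\ $\epsilon^{5/3}|\log\epsilon|$ for $d=2$, $p=6$, and $\epsilon^{3/2}$ for $d=3$, $p=6$), so replacing $u_0(x_j)$ by $u(x_j)$ perturbs the sum by $o(\epsilon^{d+1})$, within the stated remainder. Your uniformity remark in $z$ is also correct, and is likewise implicit but unstated in the paper, since the lemma constants are controlled by $\|\mathbb{G}(\cdot,z)\|_{C^1(\Omega^*)}$ and $\|f\|_{H^{1/2}(\partial\Omega)}$ on the fixed interior set $\Omega^*$.

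One caution on your claimed alternative: interior elliptic regularity combined with Lemma \ref{dot-u0-approx} is not quite ``equally valid'' in $d=2$. There the $H^2(\Omega^*)$ bound is dominated by $\|\Delta(u-u_0)\|_{L^2(\Omega)}=\|\rho u\|_{L^2(D)}=\mathcal{O}(\epsilon^{1-2/p})$, which misses the needed pointwise bound $\mathcal{O}(\epsilon)$ by a factor $\epsilon^{-2/p}$ for every admissible $p$, yielding only a remainder $\mathcal{O}(\epsilon^{d+1-2/p})$ rather than $\mathcal{O}(\epsilon^{d+1})$; in $d=3$ with $p=6$ it just suffices. So the kernel estimate should remain your argument of record.
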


We use this asymptotic expansion to develop an algorithm that detects the centers of the defective regions. To achieve this, we study the MUSIC algorithm which can be considered as a discrete analogue of the factorization method (see for e.g. \cite{cheney1,kirschbook,MUSIC-kirsch}). To this end, we let $u_0(\cdot ,g)$ and $u_0(\cdot ,f )$ denote the harmonic liftings with Dirichlet data  $g$ and $f \in H^{1/2}(\partial \Omega)$, respectively (see for e.g. Chapter 2 of \cite{evans}). Thus, using Theorem \ref{dot-main-thm} we can approximate the reciprocity gap functional \eqref{rgf} with Cauchy data $(u=f , \partial_{\nu} u)$ on $\partial \Omega$ and input $v=u_0(\cdot ,g)$. Therefore, we have that
\begin{align*}
R_f \big[ u_0( \cdot ,g) \big ]  &=  \int_{\partial \Omega} u_0 (\cdot , g) \partial_{\nu} u(\cdot , f) - u(\cdot , f) \partial_{\nu} u_0 (\cdot , g) \, \text{d}s \\
	&= \int_{\partial \Omega}  u_0(\cdot , g) \Big [ \partial_{\nu} u_0(\cdot , f) - \epsilon^d \sum_{j=1}^{J} u(x_j) \text{Avg}(\rho_j) |B_j| \partial_{\nu (z)} \mathbb{G}(x_j , z) \Big ] \, \text{d}s\nonumber\\
    & \phantomrel{=} {} -  \int_{\partial \Omega} u_0(\cdot , f) \partial_{\nu} u_0(\cdot ,g)\, \text{d}s  + \mathcal{O}(\epsilon^{d+1}) \\
	&=  - \epsilon^{d} \sum_{j=1}^{J} |B_j| \text{Avg}(\rho_j) u_0(x_j ,f ) \int_{\partial \Omega} u_0(\cdot ,g) \partial_{\nu (z)} \mathbb{G}(x_j , z) \, \text{d}s + \mathcal{O}(\epsilon^{d+1})
\end{align*} 
where we used the fact that $u \big \rvert_{\partial \Omega} = u_0(\cdot ,f) \big \rvert_{\partial \Omega} = f$, as well as $u_0(\cdot ,f)$ and $u_0(\cdot ,g)$ being harmonic in $\Omega$.  Furthermore, since $z \in \partial \Omega$, we have that 
$$\int_{\partial \Omega} u_0(\cdot ,g) \partial_{\nu (z)} \mathbb{G}(x_j , z) \, \text{d}s = - u_0(x_j,g).$$ 
With this, we obtain the expansion  
\begin{equation}\label{dot-rgf-approx}
R_f \big[ u_0(\cdot ,g) \big] = \epsilon^d \sum_{j=1}^{J} |B_j| \text{Avg}(\rho_j) u_0(x_j,f) u_0(x_j ,g) + \mathcal{O}(\epsilon^{d+1})
\end{equation}
as $\epsilon \rightarrow 0$ where we have made the dependance on $f$ explicit. 

In order to derive the MUSIC algorithm, {\it we assume that $\Omega$ is the unit circle for $d=2$} where we let $g = \text{e}^{\text{i}m \theta}$ and $f = \text{e}^{\text{i}n \theta}$ for $m,n = 0, \cdots , N$ for some fixed $N \in \N$. Here $\theta$ denotes the angle formed by points on $\partial \Omega$ when converted to polar coordinates. Using only the leading order term of \eqref{dot-rgf-approx}, we define the matrix 
$$ \textbf{F}_{n,m} = \epsilon^d \sum_{j=1}^{J} |B_j| \text{Avg}(\rho_j) u_0(x_j,f_n) u_0(x_j,g_n).$$
From the definition of \textbf{F}, we see that it can be factorized by the matrices $\textbf{U} \in \mathbb{C}^{(N+1)\times J}$ and $\textbf{T} \in \mathbb{C}^{J \times J}$ that are given by $$ \text{\textbf{U}}_{m,j} = u_0(x_j , f_m) \quad \text{ and } \quad \text{\textbf{T}} = \text{diag} \big(\epsilon^{d} |{B_j}| \text{Avg}(\rho_{j}) \big).$$  
Therefore, it is easy to see that \textbf{F}=\textbf{UTU}$^\top$. Notice, that by our assumptions on $\rho$, all the diagonal entries of the matrix \textbf{T} are non-zero. We now define the vector $\boldsymbol{\phi}_x \in \mathbb{C}^{N+1}$ for any $x \in \mathbb{R}^{d}$ by
\begin{equation}\label{phix}
\boldsymbol{\phi}_x = \left( u_0(x,f_0), \cdots , u_0(x,f_N) \right)^{\top }.
\end{equation}
The goal is to prove that the vector $\boldsymbol{\phi}_x $ is in the range of $\text{\textbf{FF}}^{*}$ if and only if $x$ is contained in the set $\brac{x_j : j = 1 , \hdots , J}$ as similarly done in \cite{EIT-granados1}. This is a discretized version of the factorization method initially studied for this problem in \cite{harris1}. See for e.g. \cite{cheney1,MUSIC-kirsch} for the connection of the factorization method and MUSIC algorithm. 

We now construct an imaging functional derived from the leading order term in the asymptotic expansion of the reciprocity gap functional. To this end, we need to show that for each sampling point $x \in \Omega$ we have that $\boldsymbol{\phi}_x $ is in the range of \textbf{FF}$^{*}$ if and only if $x \in \{ x_j : j = 1 , \hdots , J\}$. This result has been proven in Theorem 3.2 of \cite{EIT-granados1}. To avoid repetition, we will state the following result and reference the proofs in Section 3 of \cite{EIT-granados1} for details. 

\begin{theorem}\label{musicthm}
Assume that $N+1 > J$. Then for all $x \in \Omega$ being given by the unit circle 
$$\boldsymbol{\phi}_x \in Range ( \text{\textbf{FF}}^{*} ) \quad \text{if and only if} \quad x \in \brac{x_j : j = 1 , \hdots , J}$$ 
where $\boldsymbol{\phi}_x$ is defined as in \eqref{phix}. Moreover, the rank of the matrix $\text{\textbf{FF}}^{*} $ is given by $J$. 
\end{theorem}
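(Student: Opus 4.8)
The plan is to reduce the statement to two classical facts about Vandermonde systems, after identifying the columns of $\textbf{U}$ with the test vectors $\boldsymbol{\phi}_{x_j}$. First I would make the harmonic liftings explicit. On the unit disk the harmonic extension of the boundary datum $f_n = \text{e}^{\text{i}n\theta}$ (with $n \geq 0$) is $u_0(x,f_n) = z^n$, where $z = x_1 + \text{i}x_2$ is the complex coordinate of the sampling point $x$, since $z^n$ is holomorphic hence harmonic and agrees with $\text{e}^{\text{i}n\theta}$ on $\partial\Omega$. Consequently $\boldsymbol{\phi}_x = (1, z, z^2, \ldots, z^N)^\top$ is a Vandermonde vector, and by the definition $\textbf{U}_{m,j} = u_0(x_j,f_m)$ the $j$-th column of $\textbf{U}$ is exactly $\boldsymbol{\phi}_{x_j}$ with node $z_j = (x_j)_1 + \text{i}(x_j)_2$.

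Next I would establish the rank statement. Because the centers are distinct (indeed $\text{dist}(x_i,x_j)\geq c_0>0$), the nodes $z_j$ are distinct, so any collection of $J$ such Vandermonde columns is linearly independent as soon as the vector length $N+1$ is at least $J$; the hypothesis $N+1 > J$ guarantees this, whence $\textbf{U}$ has full column rank $J$. Since $\textbf{T}$ is an invertible diagonal matrix (its entries $\epsilon^d|B_j|\text{Avg}(\rho_j)$ are nonzero by the bounds $0<\rho_{\text{min}}\leq\rho\leq\rho_{\text{max}}$), the map $\textbf{T}\textbf{U}^\top$ is surjective onto $\C^J$, so
\[
\text{Range}(\textbf{F}) = \text{Range}(\textbf{U}\textbf{T}\textbf{U}^\top) = \text{Range}(\textbf{U}) = \text{span}\brac{\boldsymbol{\phi}_{x_1},\ldots,\boldsymbol{\phi}_{x_J}},
\]
a subspace of dimension $J$. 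Invoking the standard identities $\text{Range}(\textbf{F}\textbf{F}^*) = \text{Range}(\textbf{F})$ and $\text{rank}(\textbf{F}\textbf{F}^*) = \text{rank}(\textbf{F})$, valid for any matrix, this yields $\text{rank}(\textbf{F}\textbf{F}^*) = J$ at once.

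It then remains to prove the characterization $\boldsymbol{\phi}_x \in \text{Range}(\textbf{F}\textbf{F}^*) = \text{span}\brac{\boldsymbol{\phi}_{x_1},\ldots,\boldsymbol{\phi}_{x_J}}$ if and only if $x \in \brac{x_j : j=1,\ldots,J}$. The reverse direction is immediate, as $\boldsymbol{\phi}_{x_j}$ is one of the spanning vectors. For the forward direction I would argue by contraposition: if $x \notin \brac{x_j}$ then $z$ differs from every $z_j$, so $z, z_1, \ldots, z_J$ are $J+1$ distinct nodes and the vectors $\boldsymbol{\phi}_x, \boldsymbol{\phi}_{x_1}, \ldots, \boldsymbol{\phi}_{x_J}$ are linearly independent. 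Indeed their leading $(J+1)\times(J+1)$ block is a Vandermonde matrix in these distinct nodes, hence has nonvanishing determinant, and since $N+1 \geq J+1$ by hypothesis this block genuinely sits inside the vectors. Therefore $\boldsymbol{\phi}_x$ cannot belong to the span of the remaining $J$ vectors, which proves the contrapositive.

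The computations are elementary; the one place that genuinely uses the geometry of the problem, and which I would flag as the crux, is the passage from abstract test vectors to a Vandermonde system through the explicit disk harmonic extension $u_0(x,\text{e}^{\text{i}n\theta}) = z^n$ together with the distinctness of the centers. Once that identification is secured, the linear-independence counting, the dimension bound $N+1 > J$, and the invertibility of $\textbf{T}$ do all the remaining work, and the argument collapses to the well-known MUSIC/factorization reasoning; this is precisely why the statement may be cited from Theorem 3.2 of \cite{EIT-granados1}.
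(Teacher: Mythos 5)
Your proposal is correct and follows essentially the same route as the proof the paper relies on (it defers to Theorem 3.2 of \cite{EIT-granados1}): the identification $u_0(x,f_n)=z^n$ on the unit disk turning $\textbf{U}$ into a Vandermonde matrix with distinct nodes $z_j$, the factorization $\textbf{F}=\textbf{U}\textbf{T}\textbf{U}^{\top}$ with $\textbf{T}$ invertible, the range identities $\text{Range}(\textbf{F}\textbf{F}^{*})=\text{Range}(\textbf{F})=\text{Range}(\textbf{U})$, and the linear-independence count of $J+1$ Vandermonde vectors, which is exactly where $N+1>J$ is needed. I see no gaps to flag.
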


Notice, that the matrix \textbf{F} can be approximated by the known reciprocity gap functional. This implies that Theorem \ref{dot-main-thm} can be used to recover the centers of the subregions $\{x_j : j = 1 , \hdots ,J\}$. To this end, we must verify whether $\boldsymbol{\phi}_x \in$ Range$(\text{\textbf{FF}}^{*})$. This is equivalent to $\text{\textbf{P}}\boldsymbol{\phi}_x$ = 0 where $\text{\textbf{P}}$ is the orthogonal projection onto the Null$(\text{\textbf{FF}}^{*})$.

\subsection{Numerical Validation for the MUSIC Algorithm}
We now provide some numerical examples of recovering locations the $\{x_j : j = 1 , \hdots ,J\}$ using Theorem \ref{musicthm}. All of our numerical experiments are done with the software \texttt{MATLAB} 2020a. We will let $\Omega$ be given by the unit circle in $\R^2$ and we need to compute the Neumann data $\partial_{\nu} u$. It is clear that the Neumann data can be approximated by the harmonic lifting $u_0$ with the same Dirichlet data. Indeed, Lemmas \ref{dot-2nd-int} and \ref{dot-1st-int} imply that for all $z \in \partial \Omega$
$$\partial_{\nu} u \approx \partial_{\nu} u_0 - \int_{D} \rho(x) u_0(x) \partial_{\nu (z)} \mathbb{G} (x , \cdot) \, \text{d}x.$$
This can be seen as an analog to the Born approximation used in scattering theory (see for e.g. \cite{MUSIC-kirsch}). It is clear that  for Dirichlet data $\text{e}^{\text{i}n \theta}$ the harmonic lifting is given by 
$$u_0(x, \text{e}^{\text{i}n \theta}) = |x|^n \text{e}^{\text{i}n \theta} \quad \text{ for all } \quad n \in \mathbb{N}\cup \{ 0 \}.$$ 
It is also well known that the normal derivative of $ \mathbb{G} (x , z)$ is given by
$$ \partial_{\nu(z)} \mathbb{G}\big ( x  , z \big ) \big|_{\partial \Omega} =  \frac{1}{2 \pi} \left[ \frac{1 - |x|^2 }{|x|^2 + 1 - 2 |x| \text{cos}(\theta - \theta_{z})}   \right]$$
for $z \in \partial \Omega$. Therefore, we can compute $\partial_{\nu} u$ using the `\texttt{integral2}' command in \texttt{MATLAB}. 

Given the Dirichlet data 
$$f_m = \text{e}^{\text{i}m \theta} \quad \text{ and its corresponding Neumann data} \quad  \partial_{\nu} u (\cdot ,\text{e}^{\text{i}m \theta}),$$
 we can easily approximate the reciprocity gap functional as given by \eqref{rgf} using 64 equally spaced points on the unit circle for $v=u_0 ( \cdot ,\text{e}^{\text{i}n \theta}) $. We have that 
$$R_{f_m} \big[u_0 ( \cdot ,\text{e}^{\text{i}n \theta})\big] = \int_{\partial \Omega} \text{e}^{\text{i}n \theta} \partial_{\nu}u(\cdot , \text{e}^{\text{i}m \theta}) - u(\cdot , \text{e}^{\text{i}m \theta}) n \text{e}^{\text{i}n \theta}\, \text{d}s$$ 
which is approximated via a Riemann sum using the `\texttt{dot}' command in \texttt{MATLAB} for $m,n = 0 , \cdots , 20$. By appealing to the asymptotic result in Theorem \ref{dot-main-thm}, we have that 
$$\textbf{F}_{n,m} \approx \int_{\partial \Omega} \text{e}^{\text{i}n \theta} \partial_{\nu}u(\cdot , \text{e}^{\text{i}m \theta}) - u(\cdot , \text{e}^{\text{i}m \theta}) n \text{e}^{\text{i}n \theta}\, \text{d}s\quad \text{ such that } \quad \textbf{F} \in \C^{21 \times 21}.$$

Once $\textbf{F}$ has been approximated we can use Theorem \ref{musicthm} to recover the locations of the components of $D$. We only need to check if the vector $\boldsymbol{\phi}_x$ is in the range of $\text{\textbf{FF}}^{*}$. Therefore, we compute the norm 
$$\| \text{\textbf{P}}\boldsymbol{\phi}_x \|^2_2 = \sum\limits_{\ell=r+1}^{21} \left|\big( \boldsymbol{\phi}_x , {\bf u}_\ell \big) \right|^2 $$
where the vectors ${\bf u}_\ell$ are the orthonormal eigenvectors for ${\bf F}{\bf F}^*$ and $r=$Rank$\big( {\bf F}{\bf F}^* \big)$. 
Recall, that the vector 
$$\boldsymbol{\phi}_x = \bigg(1,  |x| \text{e}^{\text{i} \theta}\,  , \, \hdots \, , \, |x|^{20} \text{e}^{20 \text{i}\theta} \bigg)^\top$$ 
by equation \eqref{phix} where $\theta$ is the polar angle for the sampling point $x \in \Omega$. Here $\text{\textbf{P}}$ denotes the orthogonal projection onto the Null$(\text{\textbf{FF}}^{*})$. Therefore, the imaging functional for recovering the centers is given by 
$$W_{\text{MUSIC}}(x) = \left[ \sum\limits_{\ell=r+1}^{21} \left|\big( \boldsymbol{\phi}_x , {\bf u}_\ell \big) \right|^2 \right]^{-1} \quad \text{ for any  } \quad x \in \Omega$$
which has the property that $W_{\text{MUSIC}}(x)\gg1$ for $x=x_j$ and $W_{\text{MUSIC}}(x)= \mathcal{O}(1)$ for $x \neq x_j$. We will plot the imaging functional to provided a numerical approximation of the centers $x_j$ for $j=1 , \cdots , J$.

In Figures \ref{dot-music0}, \ref{dot-music1}, and \ref{dot-music2} we use the imaging functional $W_{\text{MUSIC}}(x)$ given above to recover the locations of the two components of the region $D$. In theses experiments, the region 
$$D =\big( x_1 + \epsilon B(0,1) \big) \bigcup  \big( x_2 + \epsilon B(0,1)\big)$$
with $B(0,1)$ being the unit ball centered at the origin. The points $x_1$ and $x_2$ are points contained in the region $\Omega$. We will take the forcing term to be given by $\rho=1$ on both components of $D$. Here, we take $\epsilon = 0.01$ as well as adding $\delta = 5\%$ random noise to the computed normal derivative of the difference of $u$ and its harmonic lifting to simulate error in measured data. \\

\noindent{\bf Example 1: }\\
In our first example presented here, we let
$$x_1 = (-0.25 , 0.25) \quad \text{and} \quad x_2 = (0.25 , -0.25)$$
for the reconstruction in Figure \ref{dot-music0}. Here we let $\delta = 5 \%$ and $\rho = 1$ in both subregions. Presented is a contour and surface plot of the imaging functional $W_{\text{MUSIC}}(x)$. As we can see, the imaging functional is elevated in the general region around the centers.
\begin{figure}[ht]
\centering 
\includegraphics[scale=0.15]{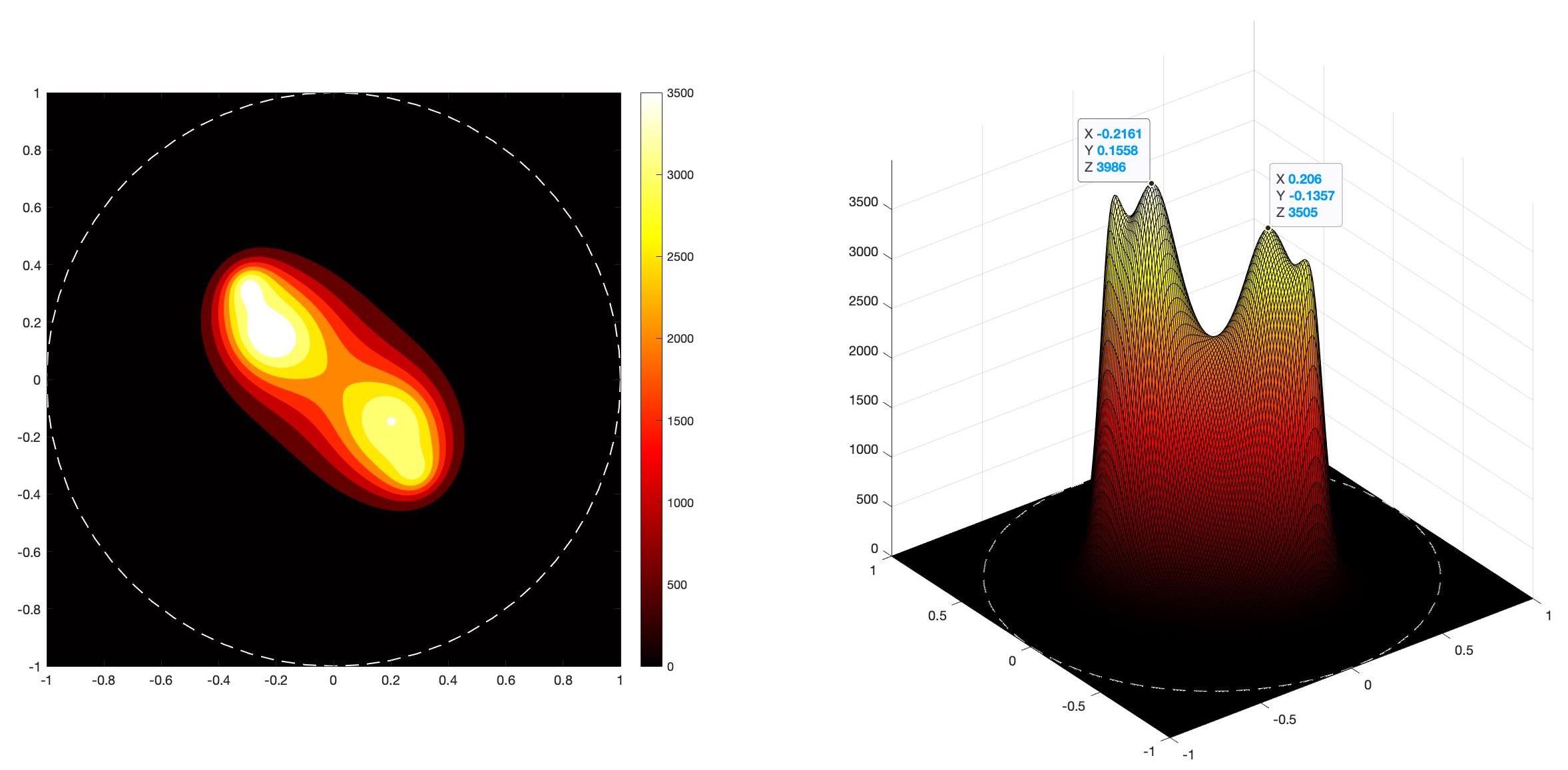}
\caption{Initial reconstruction of the locations $x_1 = (-0.25 , 0.25)$ and $x_2 = (0.25 , -0.25)$ via the MUSIC algorithm with the faulty rank of $\textbf{FF}^*$. Contour plot  on the left and Surface plot on the right of the imaging function $W_{\text{MUSIC}}(x)$.}
\label{dot-music0}
\end{figure}

Recall, that the imaging functional $W_{\text{MUSIC}}(x)$ depends on the rank of $\textbf{FF}^*$, which was calculated using the \texttt{rank} function in \texttt{MATLAB}. However, $\|{\textbf{FF}^*}\| \ll 1$ and the default tolerance of the \texttt{rank} function produces an overestimation of the true rank since the singular values of $\textbf{FF}^*$ are very small. For the rest of our numerical experiments, we improve the rank calculation by computing the singular values of $\textbf{FF}^*$ and ad hoc checking when a singular value decreases by at least 3 orders of magnitude from the previous one.
\begin{figure}[ht]
\centering 
\includegraphics[scale=0.18]{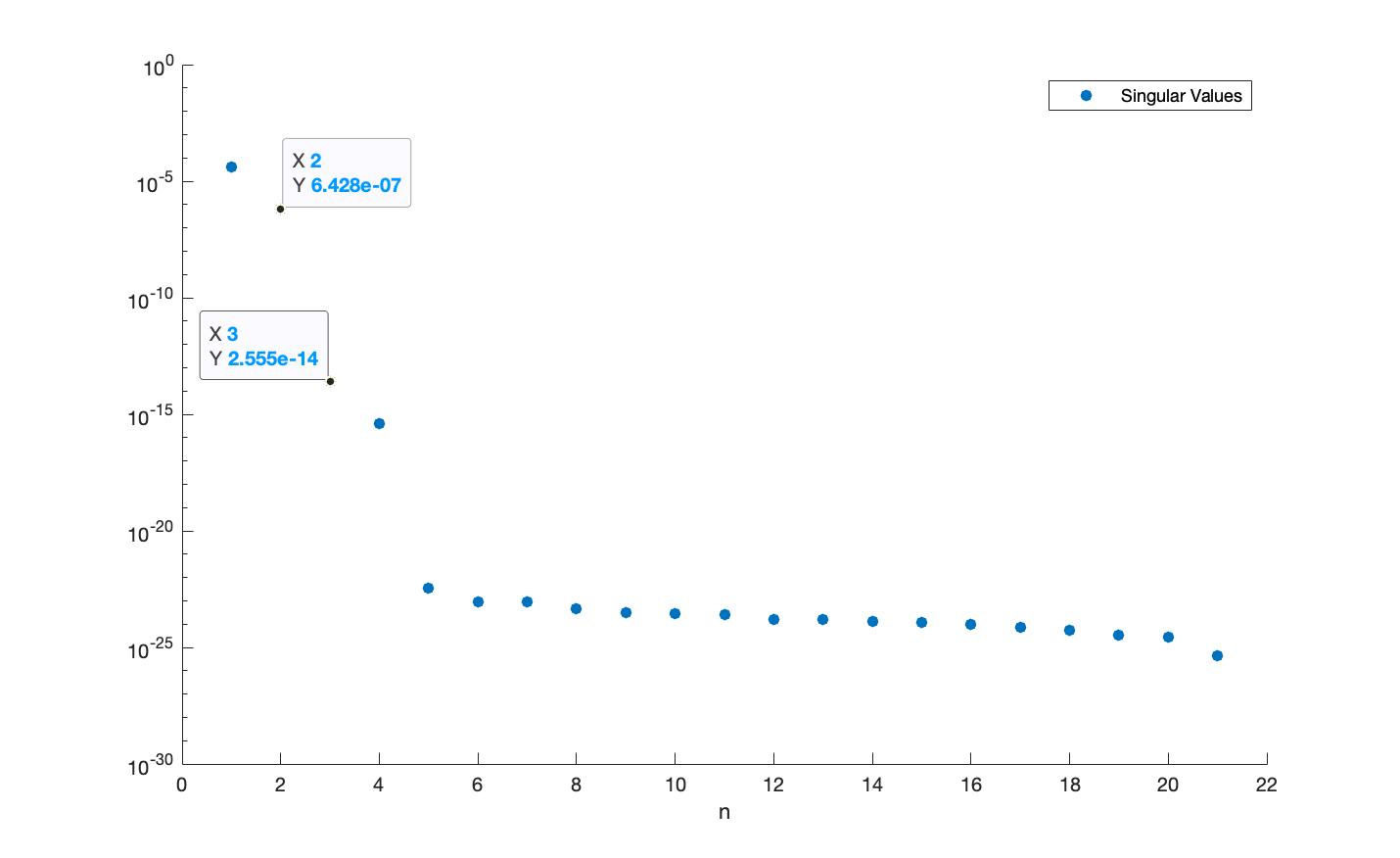}
\caption{Semi-log plot of the \texttt{svd}($\textbf{FF}^*$). Labeled are its 2nd and 3rd singular values.}
\label{sing}
\end{figure}

Figure \ref{sing}, suggests that the actual rank of $\textbf{FF}^*$ is 2, as expected by Theorem \ref{musicthm} since there are 2 components of $D$. Throughout the remaining examples of this section, we will continue to heuristically calculate the rank of $\textbf{FF}^*$ with this method. With this new method of calculating the rank, Figure \ref{dot-music1} demonstrates a clearer reconstruction of the 2 subregions centered at the locations  $x_1 = (-0.25 , 0.25)$ and $x_2 = (0.25 , -0.25)$. As we can see from the data tips, the improved imaging functional has spikes at the points
$$\widetilde{x}_1 = (-0.2462, 0.2462) \quad \text{and} \quad \widetilde{x}_2 = (0.2462, -0.2462).$$
Here we see that the locations $\widetilde{x}_1$ and $\widetilde{x}_2$ given by the MUSIC algorithm provide a good approximation for the actual locations of the components of the region D. \\
\begin{figure}[ht]
\centering 
\includegraphics[scale=0.15]{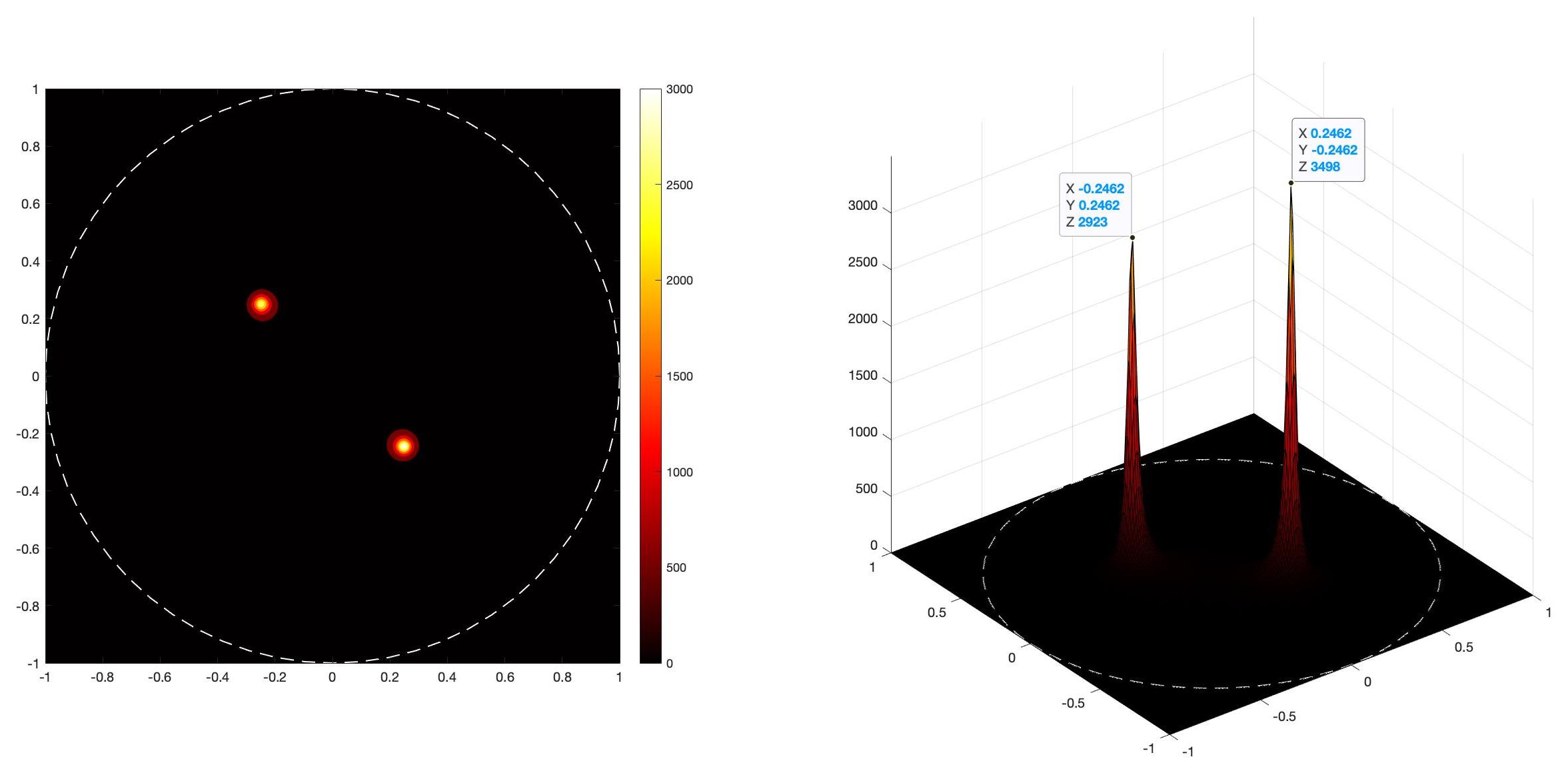}
\caption{Reconstruction of the locations  $x_1 = (-0.25 , 0.25)$ and $x_2 = (0.25 , -0.25)$ via the MUSIC algorithm. Contour plot  on the left and Surface plot on the right of the imaging functional   $W_{\text{MUSIC}}(x)$.}
\label{dot-music1}
\end{figure}

\noindent{\bf Example 2: }\\
In our second example presented here, we let 
$$x_1 = (-0.25 , 0.25) \quad \text{and} \quad x_2 = (-0.25 , -0.25)$$
for the reconstruction in Figure \ref{dot-music2}. Presented is a contour and surface plot of the imaging functional $W_\text{MUSIC}(x)$. We again let $\delta = 5\%$ and $\rho = 1$ in both subregions. As we can see from the data tips, the imaging functional has spikes at the points $$ \widetilde{x}_1 = (-0.2462, 0.2462) \quad \text{and} \quad \widetilde{x}_2 = (-0.2462, -0.2462).$$ Again, in this example we see that the locations of $\widetilde{x}_1 $ and $\widetilde{x}_2$ provide an approximation for the locations of the components of the region $D$. \\
\begin{figure}[ht]
\centering 
\includegraphics[scale=0.15]{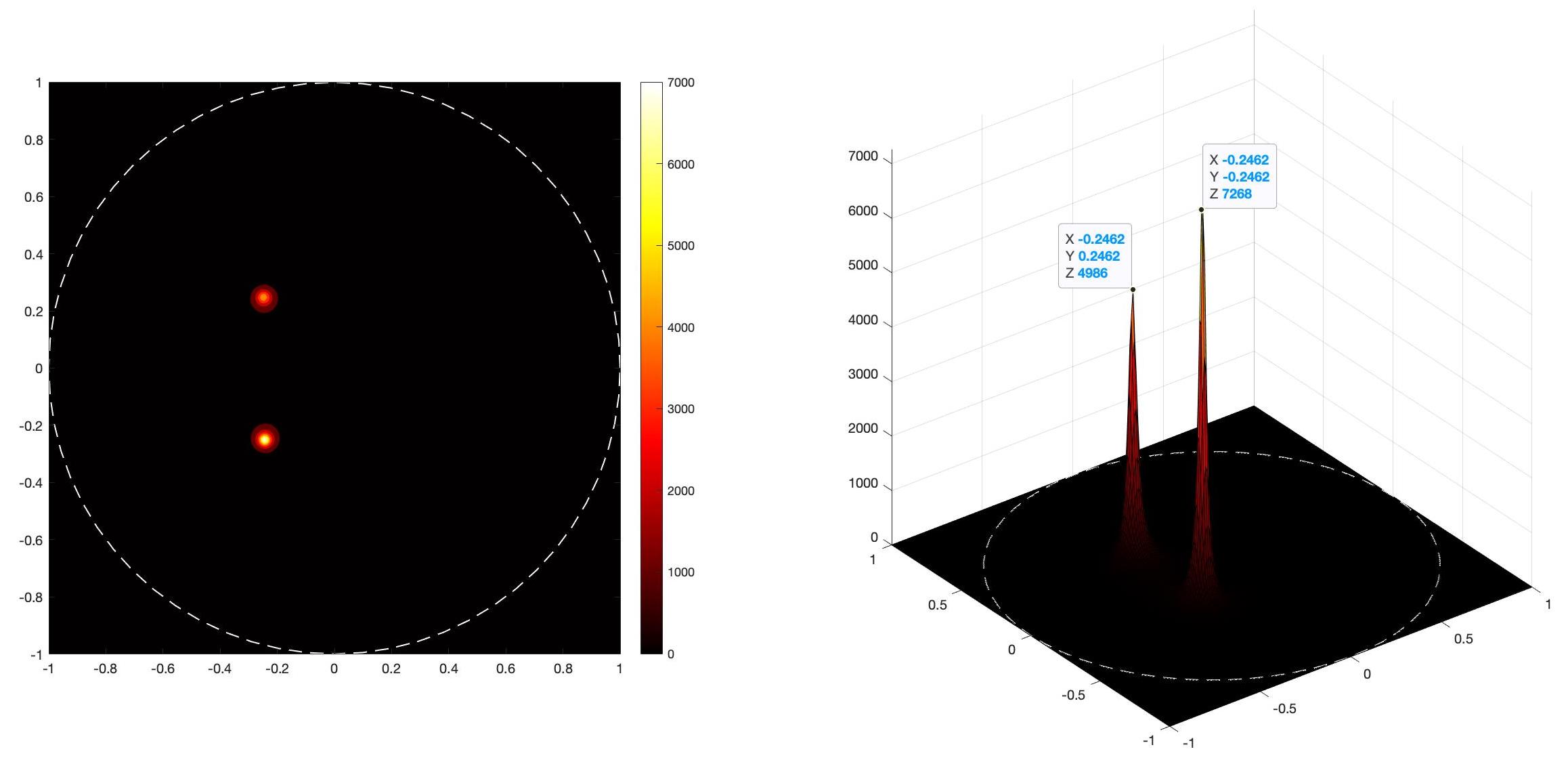}
\caption{Reconstruction of the locations  $x_1 = (-0.25 , 0.25)$ and $x_2 = (-0.25 , -0.25)$ via the MUSIC algorithm. Contour plot  on the left and Surface plot on the right of the imaging functional  $W_{\text{MUSIC}}(x)$.}
\label{dot-music2}
\end{figure}

\noindent
In our final two examples of this section, we similarly let the region 
$$D = \bigcup_{j=1}^J \big ( x_j + \epsilon B(0,1) \big) $$ 
where $J=3$ and $J=4$, respectively, with $B(0,1)$ being the unit ball centered at the origin. The points $x_j$ for $j=1, \cdots , J$ are contained in the region $\Omega$ and we once again let $\epsilon = 0.01$. However, we vary the value of the forcing term $\rho$ on each of the components of $D$. Furthermore, we add $\delta = 10 \%$ random noise to the approximated normal derivative of the difference of $u$ and its harmonic lifting to simulate error in measured data.\\

\noindent{\bf Example 3: }\\
In our third example presented here, we let
$$x_1 = (-0.75 , 0), \quad x_2 = (0.25 , 0.5), \quad \text{and} \quad x_3 = (-0.3, -0.4)$$
for the reconstruction in Figure \ref{dot-music3}. Presented is a contour and surface plot of the imaging functional $W_\text{MUSIC}(x)$. As we can see from the data tips, the imaging functional has spikes at the points 
$$ \widetilde{x}_1 = (-0.7487, -0.0151), \quad \widetilde{x}_2 = (0.2563, 0.4975), \quad \text{and} \quad \widetilde{x}_3 = (-0.2965, -0.407).$$ In this example we see that the locations of $\widetilde{x}_1 $, $\widetilde{x}_2$, and $\widetilde{x}_3$ provide an approximation for the locations of the components of the region $D$. For this example we let $\delta = 10\%$ where $\rho = 1/4$ in the region centered at $x_1$, $\rho = 1$ in the region centered at $x_2$, and $\rho = 2$ in the region centered at $x_3$. Notice that this example suggests that the MUSIC algorithm gives sharper reconstructions when the regions are well separated.\\
\begin{figure}[ht]
\centering 
\includegraphics[scale=0.15]{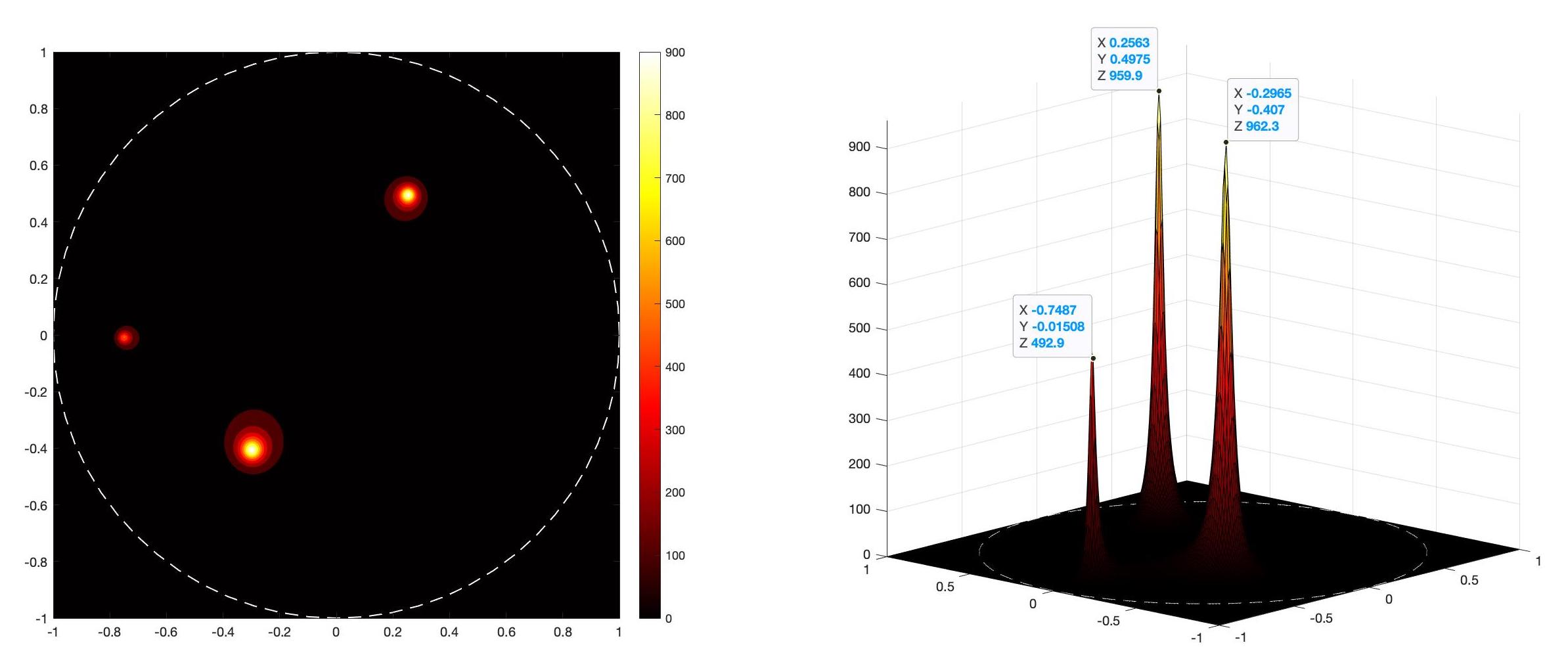}
\caption{Reconstruction of the locations $x_1 = (-0.75 , 0)$,  $x_2 = (0.25 , 0.5)$, \text{and}  $x_3 = (-0.3, -0.4)$ via the MUSIC algorithm. Contour plot on the left and Surface plot on the right of the imaging functional $W_{\text{MUSIC}}(x)$.}
\label{dot-music3}
\end{figure}

\noindent{\bf Example 4: }\\
In our final example presented here, we let
$$x_1 = (0 , 0.75), \quad x_2 = (-0.25 , 0.25), \quad x_3 = (0.25 , -0.25) \quad \text{and} \quad x_4 = (0.2 , -0.6)$$
for the reconstruction in Figure \ref{dot-music4}. Presented is a contour and surface plot of the imaging functional $W_\text{MUSIC}(x)$. As we can see from the data tips, the imaging functional has spikes at the points 
$$\widetilde{x}_1 = (-0.005, 0.7487), \quad \widetilde{x}_2 = (-0.2362, 0.2362), \quad \widetilde{x}_3 = (0.2362, -0.2562),$$
and 
$$\widetilde{x}_4 = (0.206, -0.5879).$$
In this example, we see that the reconstructed locations provide an approximation for the locations of the components of the region $D$. For this example, we let $\delta = 10\%$ where $\rho = 3/4$ in the region centered at $x_1$, $\rho = 1$ in the region centered at $x_2$, $\rho = 3/2$ in the region centered at $x_3$, and $\rho = 1/2$ in the region centered at $x_4$.\\
\begin{figure}[ht]
\centering 
\includegraphics[scale=0.15]{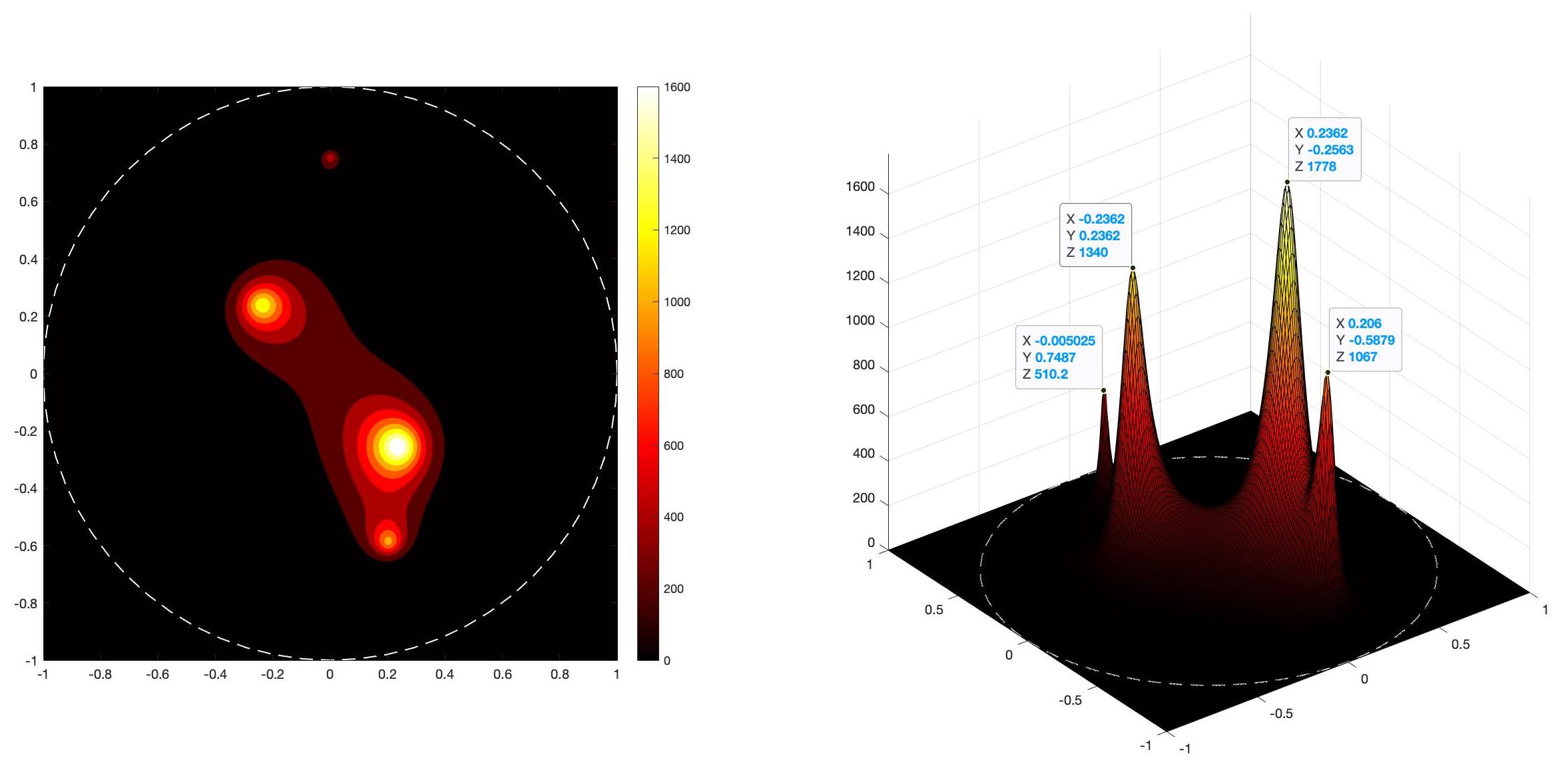}
\caption{Reconstruction of the locations $x_1 = (0 , 0.75)$, $x_2 = (-0.25 , 0.25),$ $x_3 = (0.25 , -0.25)$ \text{and} $x_4 = (0.2 , -0.6)$ via the MUSIC algorithm. Contour plot on the left and Surface plot on the right of the imaging functional $W_{\text{MUSIC}}(x)$.}
\label{dot-music4}
\end{figure}

\section{\textbf{An Application to Inverse Scattering}}\label{InvScat-section}
We now consider the direct problem in inverse scattering where the governing physical equation is the Helmholtz equation. Inverse scattering has many scientific applications in medical imagining, non-destructive testing, as well as geophysics. We are particularly concerned with detecting small volume hidden objects within a complex media in the case where one can only make measurements on an exterior surface. Just as in the previous section, we assume that the domain $\Omega \subset \mathbb{R}^d$ (for $d=2,3$) is a bounded, simply connected open set with Lipschitz boundary $\partial \Omega$ with unit outward normal $\nu$. We let $D \subset \Omega$ with Lipschitz boundary $\partial D$ satisfying \eqref{sball}. Now, let the scattered field $u^s \in H^{1}(\Omega)$ satisfy  
\begin{equation}\label{inv-scat}
\Delta u^s + k^2 u^s = \rho \chi_D \quad  \text{in} \quad \Omega \quad \text{and} \quad u^s \big \rvert_{\partial \Omega} = f
\end{equation}
for any given $f \in H^{1/2}(\partial \Omega )$ where once again $\chi_{(\cdot)}$ denotes the indicator function. We let $k$ denote the wavenumber where we assume $k^2$ is not a Dirichlet eigenvalue of $- \Delta$ in $\Omega$. With this assumption on the wave number, we have that \eqref{inv-scat} is well-posed provided that the source $\rho \in L^{\infty}(D)$. 
By equation \eqref{inv-scat} we have that the Cauchy data is such that $(f , \partial_{\nu} u^s) \in H^{1/2}(\partial \Omega) \times H^{-1/2}(\partial \Omega)$. 

In this section, we will develop a direct sampling method for solving the inverse shape problem. This method has been employed for other imaging modalities such as DOT \cite{DSM-DOT} and Electrical Impedance Tomography \cite{DSM-EIT}. See also \cite{DSM-harris-nguyen2,DSM-ito1,DSM-nf} for applications with near field measurements. MUSIC-type algorithms has also been extensively used for similar shape reconstruction problems in \cite{MUSIC-EM2,EIT-granados1,MUSIC-parkEM, MUSIC-park}. However, our method only requires pair of Cauchy data to recover the support of the source and also avoids matrix operations. Lastly, our method is also highly tolerant to noise.

\subsection{Direct Sampling Method}

We denote $u_0^s \in H^{1}(\Omega)$ as the lifting which solves the Helmholtz equation such that 
\begin{equation}\label{helm-lift}
\Delta u_0^s + k^2 u_0^s = 0 \enspace \text{in} \enspace \Omega \quad \text{and} \quad u_0^s \big \rvert_{\partial \Omega} = f.
\end{equation}
Therefore, $u_0^s$ satisfies the background problem \eqref{inv-scat} (i.e. without the forcing term) with Dirichlet data $f \in H^{1/2}(\partial \Omega)$ and wavenumber $k$. By our assumption on the wave number we have that \eqref{helm-lift} is also well-posed. We proceed by defining the Dirichlet Green's function for the Helmholtz equation for the known domain $\Omega$ as $\mathbb{G}_k(\cdot , z) \in H^{1}_{loc}(\Omega \setminus \brac{z})$, which is the unique solution to the boundary value problem 
\[ \Delta \mathbb{G}_k(\cdot , z) + k^2 \mathbb{G}_k(\cdot , z) = - \delta (\cdot , z) \enspace \text{in} \enspace \Omega \quad \text{and} \quad \mathbb{G}_k (\cdot , z) \big \rvert_{\partial \Omega} = 0.
\] 
Here, we again assume that the wavenumber $k$ is as in \eqref{inv-scat} and \eqref{helm-lift}. For any fixed $z \in \Omega$, we appeal to Green's 2nd Theorem to obtain the representation 
\begin{align*}
	 -(u^s - u_0^s)(z) &= \int_{\Omega} (u^s - u_0^s)(x) \big [ \Delta \mathbb{G}_k(x , z) + k^2 \mathbb{G}_k(x , z) \big ] \, \text{d}x \\
	 &= \int_{D} \mathbb{G}_k(x,z) \rho (x) \, \text{d}x
\end{align*}
where we used the indicator function from our source term. By taking the normal derivative, we have that for all $z \in \partial \Omega$
\begin{equation}
\partial_{\nu} (u^s - u_0^s)(z) = - \int_{D} \rho (x) \partial_{\nu (z)}  \mathbb{G}_k (x , z) \, \text{d}x
\end{equation}
where the integrand is well defined since $z \in \partial \Omega$. Again, we let $\partial_{\nu (z)}$ denote the normal derivative on $\partial \Omega$ with respect to $z$. We now begin our asymptotic analysis of the normal derivative where $D$ is the finite union of small volume regions as given by \eqref{sball}. The following lemma is key in deriving the asymptotic expansion.
\begin{lemma}\label{inv-scat-exp}
For all $z \in \partial \Omega$ where $D$ is given by \eqref{sball}, we have that as $\epsilon \rightarrow 0$ 
$$ \int_{D} \rho (x) \mathbb{G}_k (x,z) \, dx = - \epsilon^d \sum_{j=1}^{J} |B_j| Avg(\rho_j) \partial_{\nu (z)} \mathbb{G}_k (x_j , z)  + \mathcal{O}(\epsilon^{d+1})$$ where $Avg(\rho_j)$ is the average value of $\rho$ in $D_j$.
\end{lemma}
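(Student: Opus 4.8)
The plan is to reproduce the localization-and-Taylor argument of Lemma \ref{dot-1st-int}, now for the Helmholtz Green's function $\mathbb{G}_k$. The quantity whose asymptotics drive the direct sampling method is the normal derivative $\partial_\nu(u^s - u_0^s)(z)$, which by the representation derived just above the lemma equals $-\int_D \rho(x)\,\partial_{\nu(z)}\mathbb{G}_k(x,z)\,\text{d}x$; both the leading minus sign and the factor $\partial_{\nu(z)}\mathbb{G}_k$ appearing in the claimed expansion are inherited from this identity. First I would split the integral over the disjoint components using \eqref{sball}, writing $\int_D = \sum_{j=1}^{J}\int_{D_j}$.

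On each $D_j$ I would apply the change of variables $x = x_j + \epsilon y$ with $y \in B_j$, so that $\text{d}x = \epsilon^d\,\text{d}y$ and the volume scale $\epsilon^d$ is made explicit. Since $z \in \partial\Omega$ while $\text{dist}(D,\partial\Omega) > 0$, the function $x \mapsto \partial_{\nu(z)}\mathbb{G}_k(x,z)$ is smooth on a fixed interior region $\Omega^{*}$ with $D \subset \Omega^{*} \subset \Omega$ and $\text{dist}(\partial\Omega,\Omega^{*}) > 0$ for all $0 < \epsilon \ll 1$, by interior elliptic regularity for $\mathbb{G}_k$ away from its pole. Taylor's theorem then yields $\partial_{\nu(z)}\mathbb{G}_k(x_j + \epsilon y, z) = \partial_{\nu(z)}\mathbb{G}_k(x_j,z) + \mathcal{O}(\epsilon)$ as $\epsilon \to 0$. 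Substituting this and using $\int_{D_j}\rho(x)\,\text{d}x = \epsilon^d |B_j|\,\text{Avg}(\rho_j)$, which is the defining property of the average, each component collapses to $\epsilon^d |B_j|\,\text{Avg}(\rho_j)\,\partial_{\nu(z)}\mathbb{G}_k(x_j,z)$ together with an $\mathcal{O}(\epsilon^{d+1})$ contribution.

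Summing over $j = 1,\dots,J$ and restoring the leading minus sign produces the claimed expansion $-\epsilon^d \sum_{j=1}^{J} |B_j|\,\text{Avg}(\rho_j)\,\partial_{\nu(z)}\mathbb{G}_k(x_j,z) + \mathcal{O}(\epsilon^{d+1})$. The main obstacle, as in Lemma \ref{dot-1st-int}, is that the remainder must be $\mathcal{O}(\epsilon^{d+1})$ \emph{uniformly} in $z \in \partial\Omega$, since the statement is quantified over all boundary points. I would handle this by observing that the Taylor remainder on $D_j$ is controlled by $\|\partial_{\nu(z)}\mathbb{G}_k(\cdot,z)\|_{C^1(\Omega^{*})}$, and that this norm is bounded uniformly as $z$ ranges over the compact set $\partial\Omega$ — by the same interior regularity estimate together with the symmetry of $\mathbb{G}_k$, exactly the mechanism used to bound $\|\nabla\mathbb{G}(\cdot,z)\|_{L^2(D)}$ in Lemma \ref{dot-2nd-int}. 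With this uniform control the finitely many error terms sum to $\mathcal{O}(\epsilon^{d+1})$ and the expansion follows; one subtlety worth flagging is the dependence on the wavenumber $k$, which enters only through $\mathbb{G}_k$ and its interior bounds and hence does not affect the order of the remainder.
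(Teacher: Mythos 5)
Your proposal is correct and follows essentially the same route as the paper's proof: split $\int_D = \sum_{j}\int_{D_j}$, Taylor-expand $\partial_{\nu(z)}\mathbb{G}_k(\cdot,z)$ about the centers $x_j$ (justified by interior elliptic regularity since $z \in \partial\Omega$), and factor out $\int_{D_j}\rho\,\text{d}x = \epsilon^d |B_j|\,\text{Avg}(\rho_j)$. You also correctly read through the misprint in the stated lemma --- the integrand must be $\partial_{\nu(z)}\mathbb{G}_k(x,z)$ rather than $\mathbb{G}_k(x,z)$ (which vanishes for $z \in \partial\Omega$ by the Dirichlet condition and symmetry), with the minus sign inherited from the representation $\partial_\nu(u^s-u_0^s)(z) = -\int_D \rho(x)\,\partial_{\nu(z)}\mathbb{G}_k(x,z)\,\text{d}x$ --- and your explicit uniform-in-$z$ control of the Taylor remainder via $\|\mathbb{G}_k(\cdot,z)\|_{C^1(\Omega^*)}$ is a point the paper leaves implicit.
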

\begin{proof}
By \eqref{sball}, we have that $x \in D_j$ if and only if $x = x_j + \epsilon y$ for some $y \in B_j$. Since $z \in \partial \Omega$, then $\partial_{\nu (z)} \mathbb{G}(\cdot , z)$ is smooth in the interior of $\Omega$ by elliptic regularity. Therefore, we have that for all $x \in D_j$ 
$$ \partial_{\nu (z)} \mathbb{G}_k (x , z) = \partial_{\nu (z)} \mathbb{G}_k (x_j + \epsilon y , z) = \partial_{\nu (z)} \mathbb{G}_k(x_j , z) + \mathcal{O}(\epsilon)$$
as $\epsilon \rightarrow 0$  by appealing to Taylor's Theorem. From this, we obtain that 
\begin{align*}
\int_{D} \rho (x) \partial_{\nu (z) } \mathbb{G}_k (x , z) \, \text{d}x &= \sum_{j=1}^{J} \int_{D_j} \rho (x)  \partial_{\nu (z) } \mathbb{G}_k (x_j + \epsilon y , z) \, \text{d}x \\
&= \sum_{j=1}^{J} \Big (\partial_{\nu (z) } \mathbb{G}_k (x_j, z) + \mathcal{O}(\epsilon)   \Big ) \int_{D_j} \rho (x) \, \text{d}x
\end{align*}
Therefore, we have that $$\int_{D} \rho (x) \partial_{\nu (z) } \mathbb{G}_k (x , z) \, \text{d}x = \epsilon^d \sum_{j=1}^{J} |B_j| \text{Avg}(\rho_j) \partial_{\nu (z)} \mathbb{G}_k (x_j , z) + \mathcal{O}(\epsilon^{d+1}) $$ as $\epsilon \rightarrow 0$ where we used the fact that $|D_j| = \epsilon^d |B_j|$ and $\text{Avg}(\rho_j)$ denotes the average value of $\rho$ in $D_j$.
\end{proof}  
From the above lemma, it is clear that for a specified $z \in \partial \Omega$, the normal derivative of the difference of $u^s$ and the lifting $u_0^s$ is approximated by the centers of the inclusions.

\begin{theorem}\label{inv-scat-main-thm}
For any $z \in \partial \Omega$ we have that 
$$ \partial_{\nu (z)} u^s (z) = \partial_{\nu (z)} u_0^s (z) - \epsilon^d \sum_{j=1}^{J} |B_j| \text{Avg}(\rho_j) \partial_{\nu (z)} \mathbb{G}_k (x_j , z) + \mathcal{O}(\epsilon^{d+1}) \quad \text{as} \quad \epsilon \rightarrow 0$$ provided that $D$ satisfies \eqref{sball}.
\end{theorem}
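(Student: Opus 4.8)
The plan is to obtain the expansion directly by combining the exact integral representation for the Neumann jump with the single asymptotic estimate of Lemma \ref{inv-scat-exp}, so that no intermediate regularity estimate or Born-type approximation is required. The whole statement is essentially a corollary of the lemma, so the work is conceptual bookkeeping rather than new analysis.

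First I would recall the representation already established above via Green's second theorem and differentiation, namely that for every $z \in \partial\Omega$
$$\partial_{\nu(z)}(u^s - u_0^s)(z) = -\int_D \rho(x)\, \partial_{\nu(z)}\mathbb{G}_k(x,z)\,\text{d}x.$$
The crucial structural feature here --- which distinguishes this problem from the DOT problem of Section \ref{DOT-section} --- is that the source $\rho\chi_D$ enters \eqref{inv-scat} linearly and does \emph{not} multiply the unknown field $u^s$. Consequently the integrand on the right is completely determined by the known quantities ($\rho$ on $D$ and the background Green's function), and the representation is exact: there is no remainder of the form $\int_D \rho\,(u^s - u_0^s)\,\partial_{\nu(z)}\mathbb{G}_k$ that would have to be shown negligible. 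In particular I would not need analogues of Lemmas \ref{dot-u0-approx} and \ref{dot-2nd-int}, which is precisely why this expansion requires so much less effort than Theorem \ref{dot-main-thm}.

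Second I would apply the estimate proven in Lemma \ref{inv-scat-exp} to the integral on the right, replacing it by its leading-order value $\epsilon^d \sum_{j=1}^J |B_j|\,\text{Avg}(\rho_j)\,\partial_{\nu(z)}\mathbb{G}_k(x_j,z)$ up to an $\mathcal{O}(\epsilon^{d+1})$ error. This step rests only on the smoothness of $\partial_{\nu(z)}\mathbb{G}_k(\cdot,z)$ in the interior of $\Omega$ (valid because $z \in \partial\Omega$ keeps the singularity away from $D$), together with the rescaling $x = x_j + \epsilon y$ and Taylor's theorem, all of which are already carried out inside the proof of that lemma. Substituting this into the representation and moving $\partial_{\nu(z)} u_0^s(z)$ to the right-hand side yields exactly
$$\partial_{\nu(z)} u^s(z) = \partial_{\nu(z)} u_0^s(z) - \epsilon^d \sum_{j=1}^J |B_j|\,\text{Avg}(\rho_j)\,\partial_{\nu(z)}\mathbb{G}_k(x_j,z) + \mathcal{O}(\epsilon^{d+1}),$$
as claimed.

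Because the argument is a direct composition of an exact identity with one already-proven estimate, there is no genuine analytic obstacle. The only points that require care are purely cosmetic: keeping track of the two sign conventions (the minus in the normal derivative of the representation formula, and the sign in the Green's function normalization $\Delta\mathbb{G}_k + k^2\mathbb{G}_k = -\delta$), and confirming that the $\mathcal{O}(\epsilon^{d+1})$ bound is uniform in $z \in \partial\Omega$. The latter follows as in the DOT estimates, since the $C^1$ norm of $\mathbb{G}_k(\cdot,z)$ on a fixed interior neighborhood $\Omega^*$ containing $D$ is bounded uniformly in $z \in \partial\Omega$.
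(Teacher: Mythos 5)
Your proposal is correct and follows essentially the same route as the paper: the paper gives no separate proof of Theorem \ref{inv-scat-main-thm} precisely because, as you observe, the result is an immediate consequence of the exact representation $\partial_{\nu(z)}(u^s-u^s_0)(z)=-\int_{D}\rho(x)\,\partial_{\nu(z)}\mathbb{G}_k(x,z)\,\text{d}x$ (the source term does not multiply the unknown field, so no analogues of Lemmas \ref{dot-u0-approx} and \ref{dot-2nd-int} are needed) combined with Lemma \ref{inv-scat-exp}. One minor note: you correctly use the identity actually established inside the proof of Lemma \ref{inv-scat-exp}, namely $\int_{D}\rho(x)\,\partial_{\nu(z)}\mathbb{G}_k(x,z)\,\text{d}x=\epsilon^{d}\sum_{j=1}^{J}|B_j|\,\text{Avg}(\rho_j)\,\partial_{\nu(z)}\mathbb{G}_k(x_j,z)+\mathcal{O}(\epsilon^{d+1})$, rather than the lemma's displayed statement, whose left-hand side is missing the normal derivative and carries an inconsistent sign; your bookkeeping matches the intended version and yields the theorem exactly.
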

With this approximation to the Neumann data, we develop an algorithm that detects the centers of small volume regions within our domain. We now study a direct sampling method. This is done by using Theorem \ref{inv-scat-main-thm} and evaluating the reciprocity gap functional $R[v]$ given by \eqref{rgf}, where the Cauchy data $(u^s=f, \partial_{\nu}u^s)$ on $\partial \Omega$ is fixed. Recall, that we assume that $v \in H^1(\Omega)$ solves the Helmholtz equation in $\Omega$ which gives that 
\begin{align*}
R[v]  &=  \int_{\partial \Omega} v \partial_{\nu} u^s - u^s \partial_{\nu}v \, \text{d}s \\
	&= \int_{\partial \Omega} v \Big [ \partial_{\nu} u_0^s - \epsilon^d \sum_{j=1}^{J} \text{Avg}(\rho_j) |B_j| \partial_{\nu (z)} \mathbb{G}(x_j , z) + \mathcal{O}(\epsilon^{d+1})  \Big ] - u_0^s \partial_{\nu} v\, \text{d}s \\
	&=  - \epsilon^{d} \sum_{j=1}^{J} |B_j| \text{Avg}(\rho_j) \int_{\partial \Omega} v \partial_{\nu (z)} \mathbb{G}(x_j , z) \, \text{d}s + \mathcal{O}(\epsilon^{d+1}) \\
	&= \epsilon^d \sum_{j=1}^{J} |B_j| \text{Avg}(\rho_j) v(x_j) + \mathcal{O}(\epsilon^{d+1})
\end{align*} 
where we used \eqref{helm-lift} as well as the fact that $u^s \big \rvert_{\partial \Omega} = u_0^s \big \rvert_{\partial \Omega} = f$. 

Notice, we can take $v = \text{e}^{\text{i}kz \cdot \hat{y}}$, which is clearly a solution to the Helmholtz equation for all $z \in  \R^d$, when $\hat{y} \in \mathbb{S}^{d-1}$(i.e. unit circle/sphere). We proceed by defining the imaging functional $W(z): \R^d \rightarrow \mathbb{R}_{\geq 0}$ as
\begin{align}\label{dsm-func}
W(z) = \left | \big(R [\text{e}^{\text{i}kz \cdot \hat{y}}] , \text{e}^{\text{i}kz \cdot \hat{y}} \big)_{L^2 (\mathbb{S}^{d-1})} \right|.
\end{align}
This functional can be used to recover the region $D$ by plotting it's values in $\Omega$. To prove this fact, we will study the resolution analysis for this imaging functional. This will involve using the asymptotic expansion derived in Theorem \ref{inv-scat-main-thm} to write the functional in terms of Bessel functions. To this end, notice that 
\begin{align*}
  W(z) &= \left| \left( \epsilon^d \sum_{j=1}^{J} |B_j| \text{Avg} (\rho_j) \text{e}^{\text{i}kx_j \cdot \hat{y}}+\mathcal{O}(\epsilon^{d+1}), \text{e}^{\text{i}kz \cdot \hat{y}} \right)_{L^2 (\mathbb{S}^{d-1})} \right| \\
  &=  \left| \epsilon^d \sum_{j=1}^{J} \text{Avg} (\rho_j) |B_j| \int_{\mathbb{S}^{d-1}} \text{e}^{\text{i}k(x_j - z) \cdot \hat{y}} \, \text{d}s (\hat{y}) \right|+ \mathcal{O}(\epsilon^{d+1})\
\end{align*} 
where we used straightforward calculations and the asymptotic expansion of the reciprocity gap functional. Now, we will recall the Funk--Hecke integral identity 
\begin{equation*}
\int_{\mathbb{S}^{d-1}} \text{e}^{-\text{i}k(z-x)\cdot \hat{y}} \, \text{d}s(\hat{y})=
    \begin{cases}
        2\pi J_0(k|x-z|), & \text{in } \mathbb{R}^2,\\
        \\
        4\pi j_0(k|x-z|), & \text{in } \mathbb{R}^3
    \end{cases} 
\end{equation*}
see for e.g. \cite{multifreq-dsm,DSM-liu1}.
Therefore, it is now clear that for all $z \in \Omega$, 
\begin{equation}\label{w-inv-scat}
W(z) = \begin{cases} 
     \pm \epsilon^2 \, 2 \pi  \displaystyle \sum_{j=1}^{J} \text{Avg} (\rho_j) |B_j| J_0 (k |x_j - z| ) + \mathcal{O}(\epsilon^3), & d= 2 \\
     \\
    \pm  \epsilon^3 \, 4 \pi \displaystyle \sum_{j=1}^{J} \text{Avg} (\rho_j) |B_j| j_0 (k |x_j - z| ) + \mathcal{O}(\epsilon^4), &  d= 3
   \end{cases}
\end{equation}
where $J_0$ represents the zeroth order Bessel function of the first kind and $j_0$ represents the zeroth order spherical Bessel function of the first kind. This allows us to provide our main result of this section.
\begin{theorem}\label{inv-scat-thm}
Up to leading order, if $\text{Avg} (\rho_j)  \neq 0$ we have that for all $z \in \R^d \setminus D$, 
$$W(z) = \mathcal{O} \Big ( \text{dist}(z,\mathcal{X})^{\frac{1-d}{2}} \Big ) \quad \text{as} \quad \text{dist}(z,\mathcal{X}) \rightarrow \infty$$ 
provided that the region $D$ satisfies \eqref{sball}, where the set $\mathcal{X} = \brac{x_j : 1, \hdots, J}$.
\end{theorem}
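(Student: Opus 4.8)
The plan is to read the decay of $W(z)$ directly off the leading-order expression \eqref{w-inv-scat}, since all of the dependence on the sampling point $z$ is carried by the zeroth-order Bessel functions $J_0(k|x_j-z|)$ when $d=2$ and by the spherical Bessel functions $j_0(k|x_j-z|)$ when $d=3$. The essential ingredient is the large-argument behaviour of these special functions. First I would recall the classical asymptotic
$$J_0(t) = \sqrt{\frac{2}{\pi t}}\,\cos\!\left(t-\frac{\pi}{4}\right) + \mathcal{O}\!\left(t^{-3/2}\right) \quad \text{as} \quad t \to \infty,$$
so that $|J_0(t)| \leq C\, t^{-1/2}$ for $t$ sufficiently large, while in three dimensions the spherical Bessel function is explicit, $j_0(t) = \sin(t)/t$, giving the sharper bound $|j_0(t)| \leq t^{-1}$. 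In both cases the exponent is exactly $\tfrac{1-d}{2}$, namely $-\tfrac12$ for $d=2$ and $-1$ for $d=3$, which is precisely the rate claimed in the statement.

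The second step is to relate each argument $|x_j - z|$ to $\text{dist}(z,\mathcal{X})$. By definition $\text{dist}(z,\mathcal{X}) = \min_{1 \leq j \leq J} |x_j - z|$, so $|x_j - z| \geq \text{dist}(z,\mathcal{X})$ for every index $j$. Since $t \mapsto t^{(1-d)/2}$ is decreasing, substituting the asymptotic bound for each Bessel term yields, for every $j$,
$$\big| J_0(k|x_j - z|) \big| \leq C\big(k\,\text{dist}(z,\mathcal{X})\big)^{-1/2} = \mathcal{O}\!\left(\text{dist}(z,\mathcal{X})^{\frac{1-d}{2}}\right)$$
in the planar case, and the analogous estimate with $j_0$ in the three-dimensional case. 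Finally, because the sum in \eqref{w-inv-scat} is finite (there are only $J$ inclusions) and the coefficients $\text{Avg}(\rho_j)|B_j|$ are fixed nonzero constants, the triangle inequality bounds the whole sum by a constant multiple of the worst individual term. Absorbing the $\epsilon^d$ prefactor and the higher-order-in-$\epsilon$ remainder into the implied constant at leading order then gives $W(z) = \mathcal{O}\!\left(\text{dist}(z,\mathcal{X})^{\frac{1-d}{2}}\right)$ as $\text{dist}(z,\mathcal{X}) \to \infty$.

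The computation is essentially routine, and I do not anticipate a genuine obstacle beyond correctly invoking the classical Bessel asymptotics. The one point deserving care is conceptual rather than technical: the stated rate is dictated by the slowest-decaying summand, i.e.\ the term corresponding to the center $x_j$ nearest to $z$, which is exactly why the distance to the whole set $\mathcal{X}$ (and not to any single fixed center) governs the decay. This also explains why the estimate degrades—correctly—as $z$ approaches $D$, where the nearest argument $|x_j-z|$ is no longer large and the asymptotic regime is left.
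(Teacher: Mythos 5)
Your proposal is correct and takes essentially the same route as the paper, which likewise proves the theorem in one step by invoking the large-argument asymptotics $J_0(t)=\mathcal{O}\left(t^{-1/2}\right)$ and $j_0(t)=\mathcal{O}\left(t^{-1}\right)$ together with the leading-order expansion \eqref{w-inv-scat}. You merely fill in details the paper leaves implicit, namely the reduction $|x_j-z|\geq \text{dist}(z,\mathcal{X})$ for every $j$ and the triangle-inequality bound over the finite sum.
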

\begin{proof}
In order to prove the result, we use the fact that 
$$J_0 (|z-x|)= \mathcal{O}\left(|z-x|^{-1/2}\right) \quad \text{ and } \quad j_0 (|z-x|)= \mathcal{O}\left(|z-x|^{-1}\right) $$ 
as $|z-x| \to \infty$ for the case when $d=2$ or $3$ along with the expansion in \eqref{w-inv-scat}.
\end{proof}

Thus, Theorem \ref{inv-scat-thm} can be used to recover the centers of the subregions since the imaging functional $W(z)$ attains a local maximum at each of the centers. We now introduce a lemma regarding the stability  for the reciprocity gap functional. This will help us obtain a stability estimate for $W(z)$.
\begin{lemma}
For added random noise $0 < \delta < 1$, we have that for any solution $v \in H^{1}(\Omega)$ to the Helmholtz equation,
$$\big| R[v] - R^{\delta}[v] \big| \leq C \delta \|{v}\|_{H^{1}(\Omega)}$$
where $R[v]$ is given by \eqref{rgf} and the perturbed reciprocity gap functional is given by
\begin{equation}\label{rgf-error}
R^{\delta}[v] = \int_{\partial \Omega} v \partial_{\nu} u_{\delta}^{s} - u_{\delta}^{s} \partial_{\nu} \, ds
\end{equation} 
provided that there are positive constants $C_1$ and $C_2$ such that
$$\| \partial_{\nu} ( u_{\delta}^{s} -  u^{s}) \|_{H^{-1/2}(\partial \Omega)}\leq C_1 \delta \quad \text{ and } \quad \| u_{\delta}^{s} - u^{s} \|_{H^{1/2}(\partial \Omega)}\leq C_2 \delta .$$
\end{lemma}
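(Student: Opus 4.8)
The plan is to subtract the two functionals directly. Since $R[v]$ and $R^\delta[v]$ differ only through the Cauchy data appearing on $\partial\Omega$ while sharing the same test function $v$, and since the functional \eqref{rgf} is linear in that data, the difference collapses to a single boundary integral,
$$R[v] - R^\delta[v] = \int_{\partial \Omega} v\, \partial_\nu \big( u^s - u_\delta^s \big) - \big( u^s - u_\delta^s \big)\, \partial_\nu v \, \text{d}s.$$
I would then split this into its two surface integrals and estimate each one separately, interpreting each as a duality pairing between $H^{1/2}(\partial\Omega)$ and its dual $H^{-1/2}(\partial\Omega)$.

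For the first integral, the Dirichlet trace $v\big\rvert_{\partial\Omega}$ lies in $H^{1/2}(\partial\Omega)$ while $\partial_\nu(u^s - u_\delta^s)$ lies in $H^{-1/2}(\partial\Omega)$, so the Cauchy--Schwarz inequality for the dual pairing together with the first noise hypothesis gives
$$\left| \int_{\partial \Omega} v\, \partial_\nu \big( u^s - u_\delta^s \big) \, \text{d}s \right| \leq \| v \|_{H^{1/2}(\partial\Omega)} \, \| \partial_\nu( u_\delta^s - u^s) \|_{H^{-1/2}(\partial\Omega)} \leq C_1 \delta \, \| v \|_{H^{1/2}(\partial\Omega)}.$$
For the second integral the roles reverse: the trace $(u^s - u_\delta^s)\big\rvert_{\partial\Omega} \in H^{1/2}(\partial\Omega)$ is paired against $\partial_\nu v \in H^{-1/2}(\partial\Omega)$, so the second noise hypothesis produces the symmetric bound $C_2\delta\,\|\partial_\nu v\|_{H^{-1/2}(\partial\Omega)}$.

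Finally I would pass from the boundary norms of $v$ back to its interior norm. The ordinary trace theorem supplies $\| v \|_{H^{1/2}(\partial\Omega)} \leq C \| v \|_{H^1(\Omega)}$. For the normal derivative I would invoke the generalized normal-trace (Green) estimate valid for functions with $L^2$ Laplacian; since $v$ solves the Helmholtz equation we have $\Delta v = -k^2 v \in L^2(\Omega)$, whence $\| \partial_\nu v \|_{H^{-1/2}(\partial\Omega)} \leq C \big( \| v \|_{H^1(\Omega)} + \| \Delta v \|_{L^2(\Omega)} \big) \leq C \| v \|_{H^1(\Omega)}$. Adding the two estimates and absorbing constants then yields the claimed bound with $C$ depending only on $\Omega$, $k$, $C_1$, and $C_2$. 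The one step requiring genuine care is this last normal-trace bound: unlike the Dirichlet trace, $\partial_\nu v$ is defined only in the weak $H^{-1/2}(\partial\Omega)$ sense, and controlling it demands the $L^2$ Laplacian of $v$. This is precisely why the hypothesis that $v$ solves the Helmholtz equation, rather than being a generic element of $H^1(\Omega)$, is indispensable to the argument.
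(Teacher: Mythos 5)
Your proposal is correct and follows essentially the same route as the paper's own proof: subtract the two functionals, bound the resulting boundary integral via the $H^{1/2}(\partial\Omega)$--$H^{-1/2}(\partial\Omega)$ duality pairing and the noise hypotheses, then return to $\|v\|_{H^{1}(\Omega)}$ using the Dirichlet trace theorem and the weak normal-trace estimate, which requires that $v$ solves the Helmholtz equation. The only difference is that you spell out explicitly the normal-trace step that the paper compresses into the phrase ``Trace Theorems and the fact that $v$ solves Helmholtz equation,'' which is a welcome clarification rather than a departure.
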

\begin{proof} 
By simply subtracting the expressions, we have that 
\begin{align*}
R[v] - R^{\delta}[v] &=  \int_{\partial \Omega} v ( \partial_{\nu} u^s - \partial_{\nu} u_{\delta}^s) - (u^s - u_{\delta}^s ) \partial_{\nu}v \, \text{d}s
\end{align*} 
Thus, we can estimate the above quantity such that
\begin{align*}
|R[v] - R^{\delta}[v]| &\leq \Big( \|{u_{\delta}^{s} - u^{s}}\|_{H^{1/2}(\partial \Omega)} \|{\partial_{\nu} v}\|_{H^{-1/2}(\partial \Omega)} \\
& \hspace{0.4in}+ \|{v}\|_{H^{1/2}(\partial \Omega)} \|{ \partial_{\nu} (u_{\delta}^{s} - u^{s})}\|_{H^{-1/2}(\partial \Omega)} \Big) \leq C \delta \| v\|_{H^1(\Omega)} 
 \end{align*} 
by the dual-pairing of $H^{1/2}(\partial \Omega)$ and $H^{-1/2}(\partial \Omega)$. We have also used Trace Theorems and the fact that $v$ solves Helmholtz equation in $\Omega$. This proves the claim. 
\end{proof}
We are now able to present the following theorem on the error estimate for the imaging functional $W(z)$.

\begin{theorem}\label{w-error}
For added random noise $0 < \delta < 1$, we have that for any $z \in \R^d$,
\begin{equation}
|W(z) - W^{\delta}(z)| = \mathcal{O}(\delta)\quad \text{ as } \quad \delta \to 0
\end{equation}
such that the perturbed imaging functional is defined as 
$$W^{\delta}(z) =  \Big |  \big( R^{\delta} [ e^{ikz \cdot \hat{y}}] , e^{ikz \cdot \hat{y}}  \big)_{L^2 (\mathbb{S}^{d-1})} \Big | $$ where $R^{\delta}[\cdot]$ is defined as in \eqref{rgf-error}.
\end{theorem}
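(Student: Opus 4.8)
The plan is to reduce the difference of the two imaging functionals to the stability estimate for the reciprocity gap functional established in the preceding lemma. First I would remove the outer absolute values using the reverse triangle inequality: since both $W(z)$ and $W^{\delta}(z)$ are moduli of $L^2(\mathbb{S}^{d-1})$ inner products and the inner product is linear in its first argument, we have
\[
|W(z) - W^{\delta}(z)| \leq \left| \big( R[\text{e}^{\text{i}kz \cdot \hat{y}}] - R^{\delta}[\text{e}^{\text{i}kz \cdot \hat{y}}], \, \text{e}^{\text{i}kz \cdot \hat{y}} \big)_{L^2(\mathbb{S}^{d-1})} \right|.
\]
This isolates the difference $R - R^{\delta}$ inside a single inner product, which is precisely the quantity controlled by the previous lemma.

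Next I would apply the Cauchy--Schwarz inequality on $L^2(\mathbb{S}^{d-1})$ to split this inner product into the product of $\|R[\text{e}^{\text{i}k(\cdot) \cdot \hat{y}}] - R^{\delta}[\text{e}^{\text{i}k(\cdot) \cdot \hat{y}}]\|_{L^2(\mathbb{S}^{d-1})}$ and $\|\text{e}^{\text{i}kz \cdot \hat{y}}\|_{L^2(\mathbb{S}^{d-1})}$. The second factor equals $\sqrt{|\mathbb{S}^{d-1}|}$, since the plane wave has unit modulus on the sphere; this is a dimension-dependent constant independent of both $z$ and $\delta$.

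For the first factor I would apply the previous lemma pointwise in $\hat{y}$. For each fixed $\hat{y} \in \mathbb{S}^{d-1}$, the function $v = \text{e}^{\text{i}k(\cdot) \cdot \hat{y}}$ solves the Helmholtz equation in $\Omega$, so the lemma yields $|R[v] - R^{\delta}[v]| \leq C \delta \|v\|_{H^{1}(\Omega)}$. The key observation is that $\|\text{e}^{\text{i}k(\cdot) \cdot \hat{y}}\|_{H^{1}(\Omega)}$ is bounded uniformly over $\hat{y}$: because $\Omega$ is bounded and $\hat{y}$ ranges over the compact sphere, both the plane wave and its gradient $\text{i}k\hat{y}\,\text{e}^{\text{i}kx \cdot \hat{y}}$ are bounded on $\Omega$ by constants depending only on $k$ and $\Omega$. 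Hence $|R[v] - R^{\delta}[v]| \leq C\delta$ uniformly in $\hat{y}$, and integrating the square over the sphere gives $\|R[\cdot] - R^{\delta}[\cdot]\|_{L^2(\mathbb{S}^{d-1})} \leq C\delta$. Combining the two factors produces $|W(z) - W^{\delta}(z)| \leq C\delta$, which is the claimed $\mathcal{O}(\delta)$ bound.

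The only point requiring genuine care---and what I would regard as the main, albeit modest, obstacle---is securing uniformity in the integration variable $\hat{y}$. Both the constant $C$ coming from the lemma (which depends on the trace constants, $k$, $\Omega$, and the data constants $C_1, C_2$, all independent of $\hat{y}$) and the plane-wave norm $\|v\|_{H^{1}(\Omega)}$ must be controlled independently of $\hat{y}$, so that the pointwise estimate survives integration over $\mathbb{S}^{d-1}$. Once this uniformity is checked, the remainder is a routine application of the reverse triangle and Cauchy--Schwarz inequalities.
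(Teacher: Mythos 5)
Your proposal follows essentially the same route as the paper's proof: reverse triangle inequality, Cauchy--Schwarz in $L^2(\mathbb{S}^{d-1})$, the stability lemma for $R-R^{\delta}$ applied to the plane wave $v = \text{e}^{\text{i}kz\cdot\hat{y}}$, and the observation that $\|v\|_{H^1(\Omega)}$ and $\|v\|_{L^2(\mathbb{S}^{d-1})}$ are bounded independently of $\delta$. In fact you are slightly more careful than the paper, which applies the lemma to the $L^2(\mathbb{S}^{d-1})$-norm of $R[v]-R^{\delta}[v]$ without spelling out the pointwise-in-$\hat{y}$ application and the uniformity of the constants over the sphere that your argument makes explicit.
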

\begin{proof}
By the Triangle and Cauchy-Schwarz inequalities, we have that 
$$|W(z) - W^{\delta}(z)| \leq \|{R [\text{e}^{\text{i}kz \cdot \hat{y}}] - R^{\delta} [ \text{e}^{\text{i}kz \cdot \hat{y}}]}\|_{L^{2}(\mathbb{S}^{d-1})} \|{\text{e}^{\text{i}kz \cdot \hat{y}}}\|_{L^{2}(\mathbb{S}^{d-1})}.$$ 
Note, that by the previous result in Lemma \ref{rgf-error}, we have that 
$$\|{R [\text{e}^{\text{i}kz \cdot \hat{y}}] - R^{\delta} [ \text{e}^{\text{i}kz \cdot \hat{y}}]}\|_{L^{2}(\partial \Omega)} \leq C \delta \| \text{e}^{\text{i}kz \cdot \hat{y}} \|_{H^1(\Omega)} . $$
Furthermore, we have that both $\| \text{e}^{\text{i}kz \cdot \hat{y}} \|_{H^1(\Omega)}$ and $ \|{\text{e}^{\text{i}kz \cdot \hat{y}}}\|_{L^{2}(\mathbb{S}^{d-1})}$ are bounded and independent of the parameter $\delta$. Thus, we have that
\begin{align*}
|W(z) - W^{\delta}(z)| \leq C \delta \quad \text{ as } \quad \delta \to 0
\end{align*}
which proves the claim. 
\end{proof}
This result demonstrates that the imaging functional $W(z)$ is stable with respect to error in the measured Cauchy data. This implies that plotting the imaging functional is an analytically rigorous as well as computationally simple and stable. 

\subsection{Numerical Validation for the Direct Sampling Algorithm}
In this section, we provide some numerical examples for recovering the locations of the unknown source given by $\brac{x_j : j = 1, \cdots , J}$ using Theorem \ref{inv-scat-thm}. Just as in the previous section, all of our numerical experiments are once again done with the software \texttt{MATLAB} 2020a. For simplicity, {\it we let $\Omega$ be given by the unit ball in $\mathbb{R}^2$}. In order to do so, we first need a way to calculate the corresponding scattered field $u^s$ solving \eqref{inv-scat}. To this end, we can take the radiation scattered field for all $x \in \mathbb{R}^2$ given by 
$$x \longmapsto  - \int_{\mathbb{R}^2} \rho(y) \chi_{D}(y) \Phi_k (x,y) \, \text{d}y.$$ 
This scattered field solves the associated source problem in all of $\R^2$ where $\Phi_k$ denotes the radiating fundamental solution to the Helmholtz equation. 
Since $\chi_{(\cdot)}$ denotes the indicator function, we have that for all $x \in \Omega$ 
\begin{equation}\label{us-num}
u^s (x) = - \int_{D} \rho(y) \frac{\text{i}}{4} H_{0}^{(1)}(k |x-y|) \, \text{d}y
\end{equation} 
solves \eqref{inv-scat} with the corresponding Dirichlet data. It is a well known fact that the fundamental solution is given by 
$$ \Phi_k (x,y) = \frac{\text{i}}{4} H_{0}^{(1)}(k |x-y|)$$ 
where $H_{0}^{(1)}$ represents the first kind Hankel function of order zero. 

Next, we compute the normal derivative of the scattered field. 
It is straightforward to conclude that the normal derivative on $\partial \Omega$ of the solution $u^s$ is given by
\begin{equation}\label{n-us-num}
\partial_{\nu} u^s (x) = \int_{D} \rho(y) \frac{\text{i}k}{4} H_{1}^{(1)}(k |x-y|) \bigg [ \frac{1 - x \cdot y}{|x-y|}  \bigg ] \, \text{d}y
\end{equation}
where $H_{1}^{(1)}$ represents the first kind Hankel function of order one. We calculate the scattered field and its normal derivative as given by \eqref{us-num} and \eqref{n-us-num}, respectively, using the `\texttt{integral2}' command in \texttt{MATLAB}. Here, we evaluate the reciprocity gap functional $R[\text{e}^{\text{i}kx \cdot \hat{y}}]$ for 64 equally spaced points $\hat{y}$ on the unit circle. By appealing to our asymptotic result in \eqref{w-inv-scat}, the imaging functional is given by 
$$W_{\text{DIRECT}}(x) = \left| \big( R[ \text{e}^{\text{i}kx \cdot \hat{y}}] , \text{e}^{\text{i}kx \cdot \hat{y}} \big)_{L^2 (\mathbb{S}^{1})} \right|^p \quad \text{for any} \quad x \in \Omega$$ 
which is approximated via a Riemann sum using the `\texttt{dot}' command in \texttt{MATLAB}. In our calculations $p>0$ is a fixed chosen parameter to sharpen the resolution of the imaging functional. We also normalize the values of the imaging functional and pick $p=4$ in our calculations such that $W_\text{DIRECT}(x) = \mathcal{O}(1)$ for $x = x_j$ and $W_\text{DIRECT}(x) \approx 0$ for $x \neq x_j$.\\

In all our examples, we use the imaging functional $W_{\text{DIRECT}}(x)$ given above to recover the location of the components of the region $D$. In these experiments, the region $$ D = \bigcup_{j=1}^J \big( x_j + \epsilon B(0,1) \big) $$ with $B(0,1)$ being the unit circle centered at the origin. Here, we take $\epsilon = 0.01$ as well as adding random noise level $\delta$ to the simulate data $u^s$ and $\partial_\nu u^s$ on $\partial \Omega$. We let the wave number $k = 25$ and the points $x_j$ are points contained in the region $\Omega$. In Examples 1 and 2, $D$ is composed of two regions. In Example 3, $D$ is composed of three regions, and in Example 4, $D$ is composed of four regions. \\

\noindent{\bf Example 1: }\\
In our first example presented here, we let 
$$ x_1 = (0 , 0.75) \quad \text{and} \quad x_2 = (0.5 , 0)$$ for the reconstruction in Figure \ref{inv-direct1}. Presented is a contour and surface plot of the imaging functional $W_\text{DIRECT}(x)$. As we can see from the data tips, the imaging functional has spikes at the points $$ \widetilde{x}_1 = (0.0101, 0.7374) \quad \text{and} \quad \widetilde{x}_2 = (0.4949, 0.0101).$$ 
We can see that the locations of $\widetilde{x}_1 $ and $\widetilde{x}_2$ given by the Direct Sampling Algorithm provide an approximation for the locations of the components of the region $D$. Here we let noise level $\delta = 1\%$ and $\rho = 1$ in both subregions.\\
\begin{figure}[H]
\centering 
\includegraphics[scale=0.15]{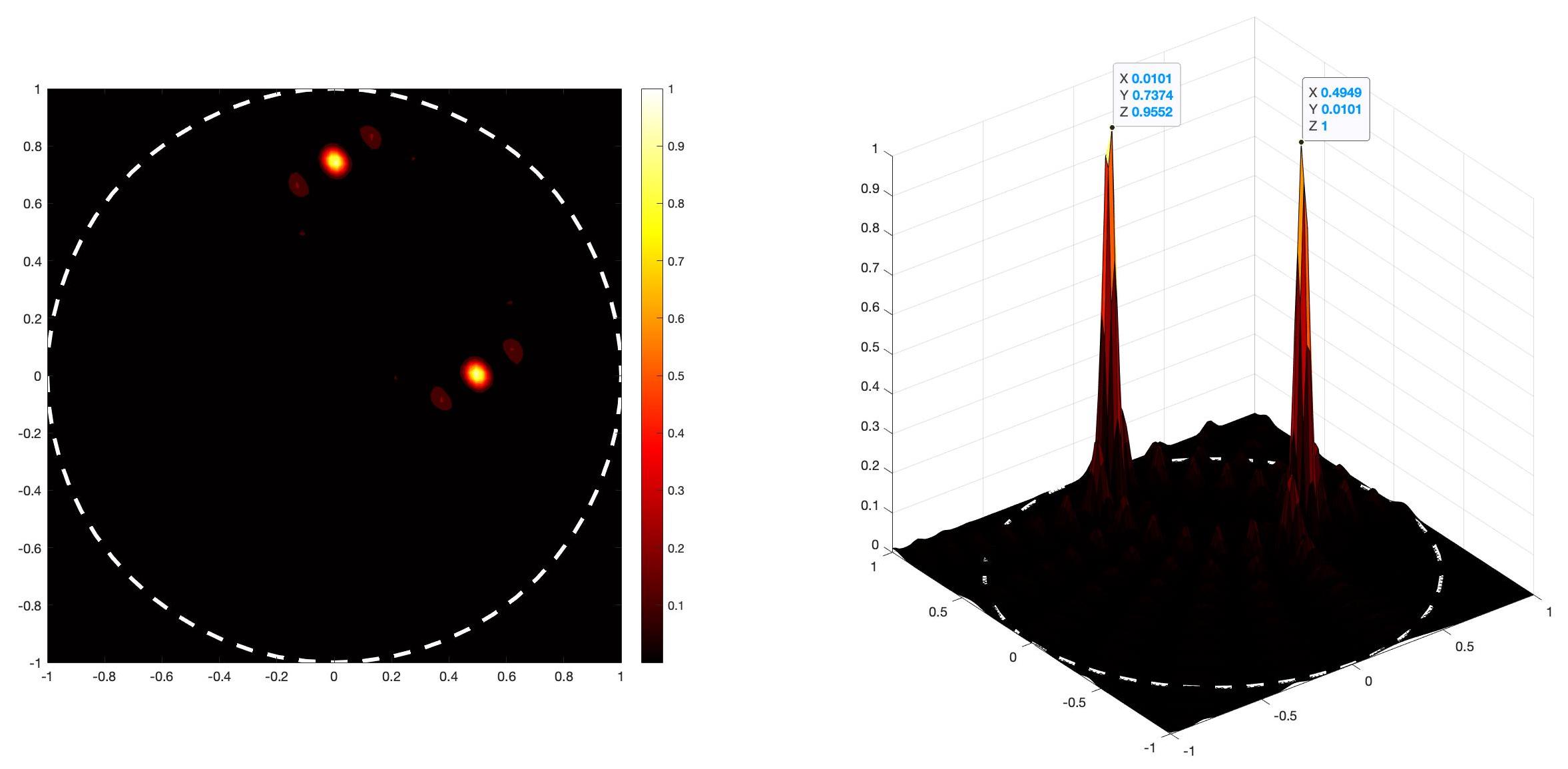}
\caption{Reconstruction of the locations $x_1 = (0 , 0.75)$ and $x_2 = (0.5 , 0)$ via the imaging function $W_{\text{DIRECT}}(x)$. Contour plot  on the left and Surface plot on the right.}
\label{inv-direct1}
\end{figure}

For the rest of the examples of this section, we vary the value of the forcing term $\rho$ on each of the components of $D$. Furthermore, we also increment the random noise level $\delta$ to demonstrate the stability of the method.\\

\noindent{\bf Example 2: }\\
For our second example presented here, we let 
$$ x_1 = (0.15 , 0.5) \quad \text{and} \quad x_2 = (0.35 , 0.2)$$ for the reconstruction in Figure \ref{inv-direct2}. Presented is a contour and surface plot of the imaging functional $W_\text{DIRECT}(x)$. As we can see from the data tips, the imaging functional has spikes at the points $$ \widetilde{x}_1 = (0.1717, 0.4747) \quad \text{and} \quad \widetilde{x}_2 = (0.3333, 0.2121).$$ 
Again, in this example we see that the locations of $\widetilde{x}_1 $ and $\widetilde{x}_2$ given by the direct sampling method provide an approximation for the locations of the components of the region $D$. Here we let noise level $\delta = 10\%$ where $\rho = 0.9$ in the region centered at $x_1$ and $\rho = 1$ in the region centered at $x_2$. In this example, notice that we have reduced the distance between $x_1$ and $x_2$ and incremented noise level $\delta$ from Example 1. Thus, the sharp reconstruction of $D$ as shown in Figure \ref{inv-direct2} illustrates the stability and robustness of this method.\\
\begin{figure}[H]
\centering 
\includegraphics[scale=0.15]{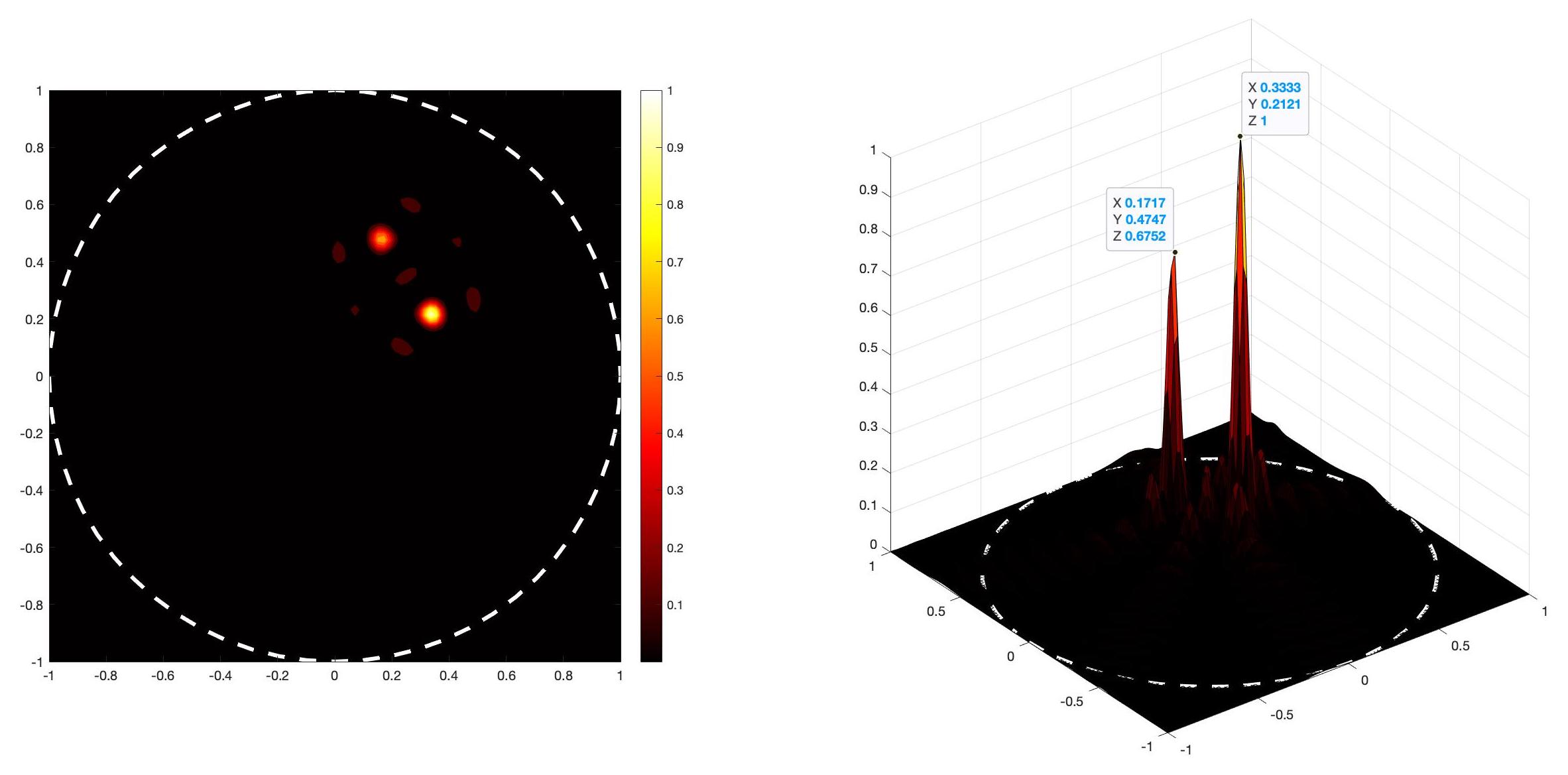}
\caption{Reconstruction of the locations $x_1 = (0.15 , 0.5)$ and $x_2 = (0.35 , 0.2)$ via the imaging functional $W_{\text{DIRECT}}(x)$. Contour plot  on the left and Surface plot on the right.}
\label{inv-direct2}
\end{figure}\

\noindent{\bf Example 3: }\\
In our third example presented here, we let
$$ x_1 = (-0.5 , -0.5), \quad x_2 = (0 , 0) \quad \text{and} \quad x_3 = (0.5, 0.25)$$ 
for the reconstruction in Figure \ref{inv-direct3}. Presented is a contour and surface plot of the imaging functional $W_\text{DIRECT}(x)$. As we can see from the data tips, the imaging functional has spikes at the points $$ \widetilde{x}_1 = (-0.5152, -0.5152), \quad \widetilde{x}_2 = (0.0101, 0.0101) \quad \text{and} \quad \widetilde{x}_3 = (0.4949, 0.2525).$$ In this example we see that the locations of $\widetilde{x}_1 $, $\widetilde{x}_2$ and $\widetilde{x}_3$ provide an approximation for the locations of the components of the region $D$. For this example we let $\delta = 20\%$ where $\rho = 0.8$ in the region centered at $x_1$, $\rho = 1.1$ in the region centered at $x_2$, and $\rho = 0.9$ in the region centered at $x_3$. \\
\begin{figure}[H]
\centering 
\includegraphics[scale=0.15]{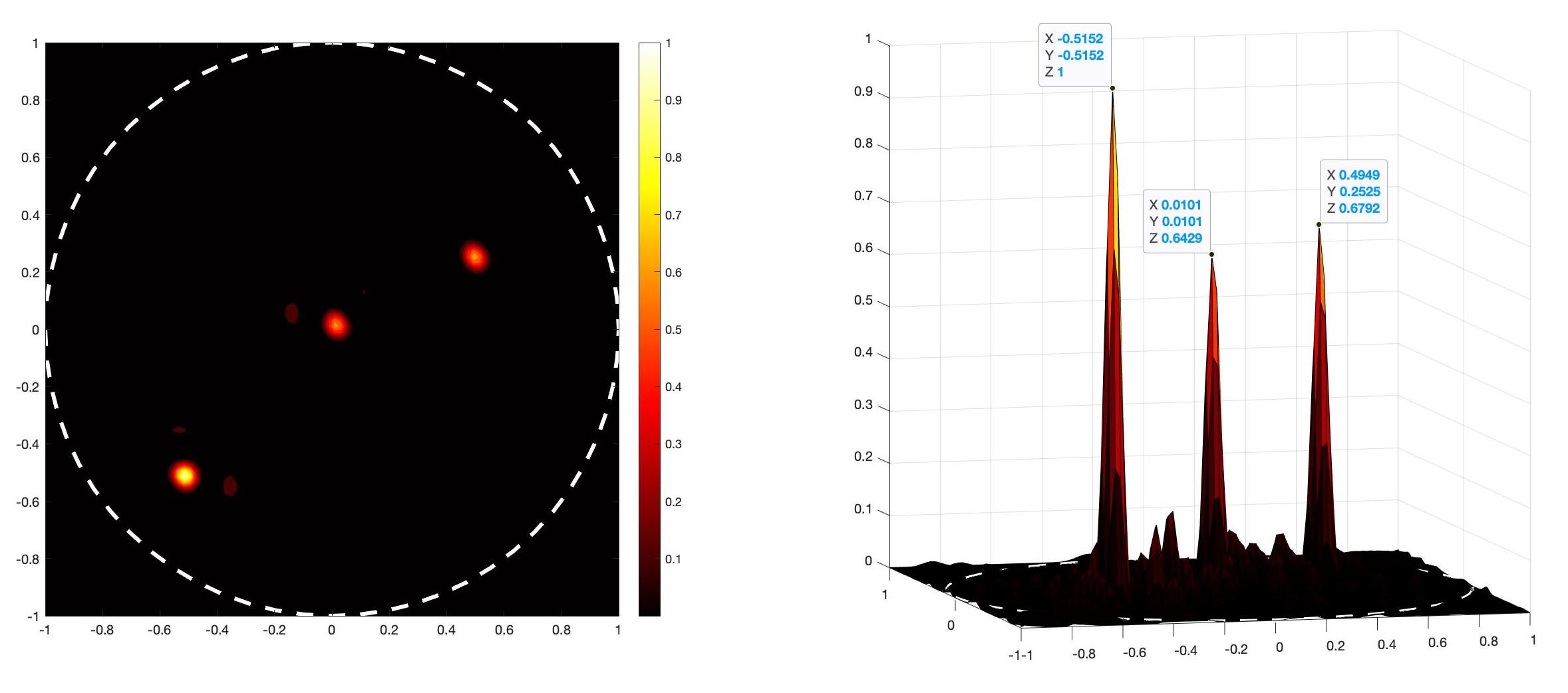}
\caption{Reconstruction of the locations $x_1 = (-0.5 , -0.5)$, $x_2 = (0 , 0)$ and $x_3 = (0.5, 0.25)$ via the imaging functional $W_{\text{DIRECT}}(x)$. Contour plot  on the left and Surface plot on the right.}
\label{inv-direct3}
\end{figure}

\noindent{\bf Example 4: }\\
In our final example presented here, we let
$$ x_1 = (0 , 0.5), \quad x_2 = (0.25 , 0.25), \quad x_3 = (-0.25 , -0.25) \quad \text{and} \quad x_4 = (0 , -0.75)$$ for the reconstruction in Figure \ref{dot-music4}. Presented is a contour and surface plot of the imaging functional $W_\text{DIRECT}(x)$. As we can see from the data tips, the imaging functional has spikes at the points 
$$ \widetilde{x}_1 = (0.0101, 0.4949), \quad \widetilde{x}_2 = (0.2525, 0.2727), \quad \widetilde{x}_3 = (-0.2525, -0.2727)$$
and 
$$\widetilde{x}_4 = (-0.0101, -0.7576).$$
In this example we see that the reconstructed locations provide an approximation for the locations of the region $D_j$. For this example we let $\delta = 25\%$ where $\rho = 0.95$ in the region centered at $x_1$, $\rho = 1$ in the region centered at $x_2$, $\rho = 0.9$ in the region centered at $x_3$, and $\rho = 1.1$ in the region centered at $x_4$.\\
\begin{figure}[H]
\centering 
\includegraphics[scale=0.15]{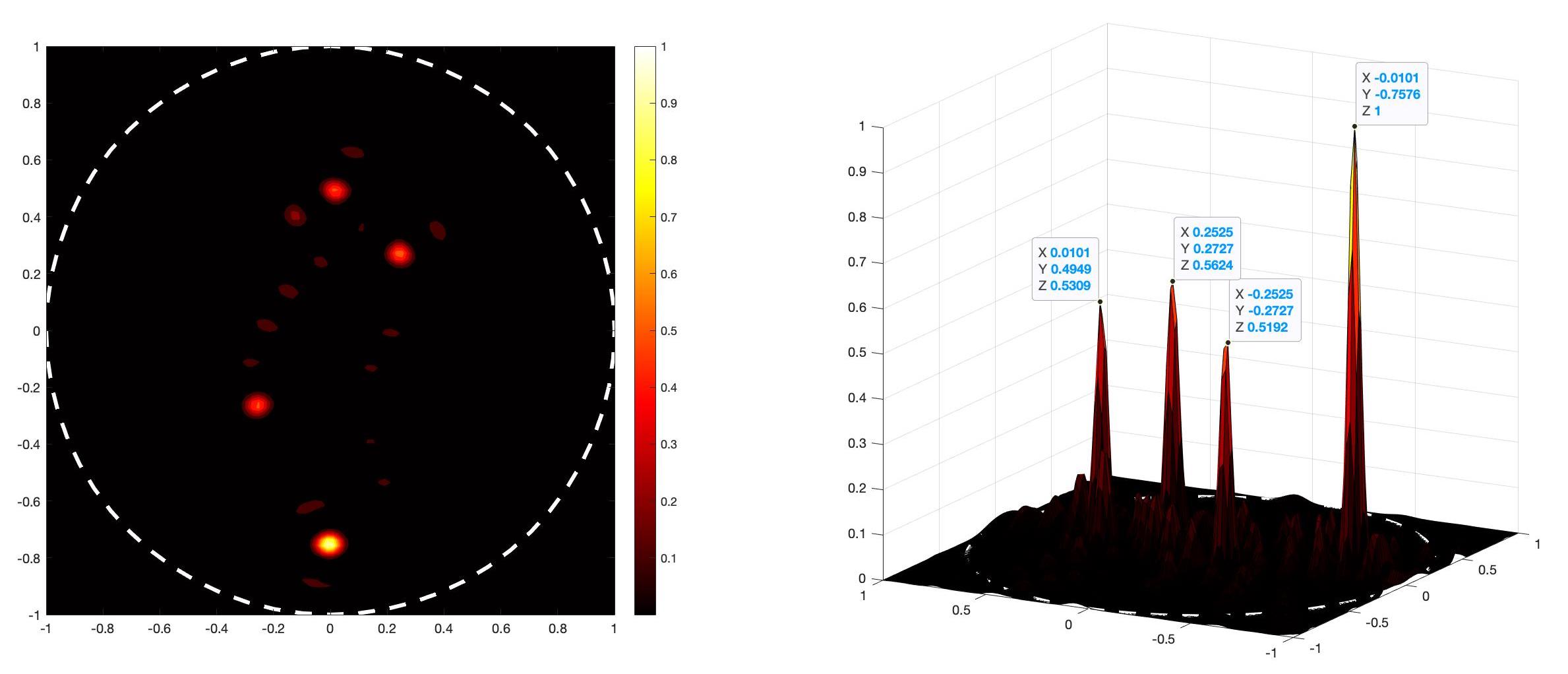}
\caption{Reconstruction of the locations $x_1 = (0 , 0.5)$, $x_2 = (0.25 , 0.25)$, $x_3 = (-0.25 , -0.25)$ and $x_4 = (0 , -0.75)$ via the imaging functional $W_{\text{DIRECT}}(x)$. Contour plot  on the left and Surface plot on the right.}
\label{inv-direct4}
\end{figure}

\section{Conclusions}\label{Conclusion}
In this paper, we studied the use of qualitative methods for small volume inverse shape problems in DOT and inverse scattering. In both cases, we analyzed the asymptotic expansion of the reciprocity gap functional \eqref{rgf} in order to construct an imaging functional to recover the region of interest $D$. For the DOT problem, we have studied the MUSIC algorithm. Whereas in the inverse scattering problem, we derived a direct sampling method. We note that the analysis provided here can be used to study the inverse scattering problems in $\R^d$ for $d=2,3$, where one can use \eqref{us-num} and the asymptotic analysis presented here. Both algorithms allow for fast and accurate reconstruction with little a priori knowledge of $D$. A future direction for this project, in the area of inverse scattering can be to study the problem in Section \ref{InvScat-section} for the case of electromagnetic and elastic scattering. Another interesting project would be to develop a direct sampling method as in \cite{DSM-DOT} for the DOT problem presented in Section \ref{DOT-section}. \\

\noindent{\bf Acknowledgments:} The research of G. Granados and I. Harris is partially supported by the NSF DMS Grant 2107891.


\end{document}